\newcommand{\boldpi}{{\boldsymbol \pi}}
\NewDocumentCommand{\makeabbrev}{mmm}
 {
  \yoruk_makeabbrev:nnn { #1 } { #2 } { #3 }
 }
\makeabbrev{\textbf}{tbf#1}{a,b,c,d,e,f,g,h,i,j,k,l,m,n,o,p,q,r,s,t,u,v,w,x,y,z,A,B,C,D,E,F,G,H,I,J,K,L,M,N,O,P,Q,R,S,T,U,V,W,X,Y,Z}
\makeabbrev{\textbf}{bf#1}{a,b,c,d,e,f,g,h,i,j,k,l,m,n,o,p,q,r,s,t,u,v,w,x,y,z,A,B,C,D,E,F,G,H,I,J,K,L,M,N,O,P,Q,R,S,T,U,V,W,X,Y,Z}
\makeabbrev{\textsf}{tsf#1}{a,b,c,d,e,f,g,h,i,j,k,l,m,n,o,p,q,r,s,t,u,v,w,x,y,z,A,B,C,D,E,F,G,H,I,J,K,L,M,N,O,P,Q,R,S,T,U,V,W,X,Y,Z}
\makeabbrev{\mathsf}{mss#1}{a,b,c,d,e,f,g,h,i,j,k,l,m,n,o,p,q,r,s,t,u,v,w,x,y,z,A,B,C,D,E,F,G,H,I,J,K,L,M,N,O,P,Q,R,S,T,U,V,W,X,Y,Z}
\makeabbrev{\mathfrak}{mf#1}{a,b,c,d,e,f,g,h,i,j,k,l,m,n,o,p,q,r,s,t,u,v,w,x,y,z,A,B,C,D,E,F,G,H,I,J,K,L,M,N,O,P,Q,R,S,T,U,V,W,X,Y,Z}
\makeabbrev{\mathrm}{mrm#1}{a,b,c,d,e,f,g,h,i,j,k,l,m,n,o,p,q,r,s,t,u,v,w,x,y,z,A,B,C,D,E,F,G,H,I,J,K,L,M,N,O,P,Q,R,S,T,U,V,W,X,Y,Z}
\makeabbrev{\mathbf}{mbf#1}{a,b,c,d,e,f,g,h,i,j,k,l,m,n,o,p,q,r,s,t,u,v,w,x,y,z,A,B,C,D,E,F,G,H,I,J,K,L,M,N,O,P,Q,R,S,T,U,V,W,X,Y,Z}
\makeabbrev{\mathcal}{mc#1}{A,B,C,D,E,F,G,H,I,J,K,L,M,N,O,P,Q,R,S,T,U,V,W,X,Y,Z}
\makeabbrev{\mathbb}{mbb#1}{A,B,C,D,E,F,G,H,I,J,K,L,M,N,O,P,Q,R,S,T,U,V,W,X,Y,Z}
\makeabbrev{\mathscr}{ms#1}{A,B,C,D,E,F,G,H,I,J,K,L,M,N,O,P,Q,R,S,T,U,V,W,X,Y,Z}
\makeabbrev{\mathrm}{#1}{
Id,id,ran,rk,diag,stab,ann,conv,pr,ev,tr,End,Hom,sgn,im,op,can,fin,ext,red,tot,
%
rot,usc,lsc,Lip,LocLip,lip,bSymLip,osc,AC,loc,spec,coz,z,
%
supp,Opt,Adm,Cpl,Geo,OptGeo,GeoAdm,GeoCpl,GeoSel,reg,
%
bd,co,Ric,Exp,dExp,dist,seg,Seg,cut,fcut,Cut,SDiff,Iso,Isom,diam,cl,Homeo,Diff,Der,vol,dvol,inj,relint, Graph, sub,
%
var,law,Var,Poi,Gam,pa,so,iso,fs,inv,pqi,mix,
TestF,
}
\makeabbrev{\mathsf}{#1}{DP,CD,QCD,RQCD,BE,MCP,Ent,wMTW,MTW,RCD,EVI,Irr,IH,SC,wFe,UP}
\makeabbrev{\mathsc}{#1}{mmaf,cg,cc}
\DeclareMathOperator{\bLip}{Lip_{\it b}}
\newcommand{\T}{\tau} 
\newcommand{\eps}{\varepsilon}
\renewcommand{\complement}{\mathrm{c}}
\newcommand{\mathsc}[1]{\text{\textsc{#1}}}
\newcommand{\emparg}{{\,\cdot\,}}
\newcommand{\slo}[2][]{\abs{\mathrm{D}#2}_{#1}}
\newcommand{\Ch}[1][]{\mathsf{Ch}_{#1}}
\newcommand{\as}[1]{\quad #1\text{-a.e.}}
\newcommand{\DzLocB}[1]{\mbbL^{#1}_{\loc,b}}
\DeclareMathOperator{\eqdef}{\coloneqq}
\let\epsilon\varepsilon
\newcommand{\rar}{\rightarrow}
\newcommand{\nlim}{\lim_{n}}								
\newcommand{\nliminf}{\liminf_{n }}
\newcommand{\diff}{\mathop{}\!\mathrm{d}}						
\newcommand{\ttabs}[1]{\lvert#1\rvert}	
\newcommand{\abs}[1]{\left\lvert#1\right\rvert}						
\newcommand{\norm}[1]{\left\lVert#1\right\rVert}					
\newcommand{\set}[1]{\left\{#1\right\}}							
\newcommand{\tset}[1]{\big\{#1\big\}}							
\newcommand{\ceiling}[1]{\left\lceil#1\right\rceil}					
\newcommand{\tonde}[1]{\left(#1\right)}							
\newcommand{\ttonde}[1]{\big({#1}\big)}
\newcommand{\quadre}[1]{\left[#1\right]}							
\newcommand{\Li}[2][]{\mathrm{L}_{#1}(#2)}						
\newcommand{\rep}[1]{\hat{#1}}								
\newcommand{\scalar}[2]{\left\langle #1 \,\middle |\, #2\right\rangle}		
\newcommand{\sym}[1]{{\scriptscriptstyle{(#1)}}}
\newcommand{\tym}[1]{{\scriptscriptstyle{\times #1}}}
\DeclareSymbolFont{symbolsC}{U}{pxsyc}{m}{n}
\DeclareMathSymbol{\medcirc}{\mathbin}{symbolsC}{7}
\DeclareSymbolFont{symbolsZ}{OMS}{pxsy}{m}{n}
\newcommand{\seq}[1]{\tonde{#1}}								
\newcommand{\Cb}{\mcC_b}									
\newcommand{\Cz}{\mcC_0}									
\newcommand{\pfwd}{\sharp}
\DeclareMathOperator*{\essinf}{essinf}
\DeclareMathOperator{\car}{\mathds 1}
\DeclareMathOperator{\emp}{\varnothing} 
\newcommand{\N}{{\mathbb N}}
\newcommand{\R}{{\mathbb R}}
\newcommand{\iref}[1]{\ref{#1}}
\newcommand{\comma}{\,\,\mathrm{,}\;\,}
\newcommand{\semicolon}{\,\,\mathrm{;}\;\,}
\newcommand{\fstop}{\,\,\mathrm{.}}
\newcommand{\blue}[1]{{\color{blue} {#1}}}
\let\temp\phi
\let\phi\varphi
\let\varphi\temp
\newcommand{\hr}[1]{{\bar\mssd}_{#1}} 						
\newcommand{\Rad}[2]{\mathsf{Rad}_{#1,#2}}
\newcommand{\SL}[2]{\mathsf{SL}_{#1,#2}}
\numberwithin{equation}{section}
\theoremstyle{plain}
\newtheorem{thm}{Theorem}[section]
\newtheorem*{thm*}{Theorem}
\newtheorem*{mthm*}{Main Theorem}
\newtheorem{prop}[thm]{Proposition}
\newtheorem*{prop*}{Proposition}
\newtheorem{lem}[thm]{Lemma}
\newtheorem{cor}[thm]{Corollary}
\theoremstyle{definition}
\newtheorem{defs}[thm]{Definition}
\newtheorem*{defs*}{Definition}
\theoremstyle{remark}
\newtheorem{rem}[thm]{Remark}
\newtheorem*{rem*}{Remark}
\newtheorem{ese}[thm]{Example}
\newtheorem*{quest*}{Question}
\renewcommand{\paragraph}[1]{\medskip{\noindent\emph{#1}.\quad}}
\def\@fnsymbol#1{\ensuremath{\ifcase#1\or *\or \mathsection \or  \mathparagraph \or \dagger\or
    \ddagger \or \|\or **\or \dagger\dagger
   \or \ddagger\ddagger \else\@ctrerr\fi}}
\newcommand\thankssymb[1]{\textsuperscript{\@fnsymbol{#1}}}
\def\l@subsection{\@tocline{2}{0pt}{2.5pc}{5pc}{}}
\begin{document}
\title[Properties of QCD Spaces]{Sobolev-to-Lipschitz Property on $\QCD$-spaces\\and Applications\thankssymb{4}}

\author[L.~Dello Schiavo]{Lorenzo Dello Schiavo\thankssymb{1}}
\address{IST Austria, Am Campus 1, 3400 Klosterneuburg, Austria}
\email{lorenzo.delloschiavo@ist.ac.at}
\thanks{\thankssymb{1}IST Austria, e-mail: lorenzo.delloschiavo@ist.ac.at. Orcid: 0000-0002-9881-6870}

\author[K.~Suzuki]{Kohei Suzuki\thankssymb{2}}
\address{Fakult\"at f\"ur Mathematik, Universit\"at Bielefeld, D-33501, Bielefeld, Germany}
\email{ksuzuki@math.uni-bielefeld.de}
\thanks{\thankssymb{2}Institut f\"ur Mathematik, Universit\"at Bielefeld, e-mail: ksuzuki@math.uni-bielefeld.de. Orcid: 0000-0002-3048-9738}

\thanks{\thankssymb{4}The authors are grateful to Dr.~Bang-Xian Han for helpful discussions on the Sobolev-to-Lipschitz property on metric measure spaces, and to Professor~Kazuhiro Kuwae, Professor Emanuel Milman, Dr.~Giorgio Stefani, and Dr.~Gioacchino Antonelli for reading a preliminary version of this work and for their valuable comments and suggestions.
Finally, they wish to express their gratitude to two anonymous Reviewers whose suggestions improved the presentation of this work.
\\
\indent L.D.S.\ gratefully acknowledges funding of his position by the Austrian Science Fund (FWF) grant F65, and by the European Research Council (ERC, grant No.~716117, awarded to Prof.\ Dr.~Jan Maas).
\\
\indent K.S.\ gratefully acknowledges funding by: the JSPS Overseas Research Fellowships, Grant Nr. 290142; World Premier International Research Center Initiative (WPI), MEXT, Japan; JSPS Grant-in-Aid for Scientific Research on Innovative Areas ``Discrete Geometric Analysis for Materials Design'', Grant Number 17H06465; and the Alexander von Humboldt Stiftung, Humboldt-Forschungsstipendium.}

\begin{abstract}
We prove the Sobolev-to-Lipschitz property for metric measure spaces satisfying the quasi curvature-dimension condition recently introduced in E.~Milman, \emph{The Quasi Curvature-Dim\-ension Condition with applications to sub-Riemannian manifolds}, Comm.\ Pure Appl.~Math.\ (to appear, arXiv:1908.\ 01513v5). We provide several applications to properties of the corresponding heat semigroup.
In particular, under the additional assumption of infinitesimal Hilbertianity, we show the Varadhan short-time asymptotics for the heat semigroup with respect to the distance, and prove the irreducibility of the heat semigroup.
These result apply in particular to large classes of (ideal) sub-Riemannian manifolds.
\end{abstract}
\subjclass[2020]{Primary: 46E36; Secondary: 53C17}

\keywords{Quasi Curvature-Dimension Condition, sub-Riemannian geometry, Sobolev-to-Lipschitz property, Varadhan short-time asymptotics}

\maketitle


\date{\today}
\maketitle

\section{Introduction}
In~\cite{Mil21}, E.~Milman introduced the notion of \emph{quasi curvature-dimension condition}~$\QCD$ for a metric measure space~$(X,\mssd,\mssm)$, simultaneously generalizing Lott--Villani--Sturm's \emph{curvature-dimension condition}~$\CD(K,N)$ with finite~$N$, \cite{LotVil09,Stu06a, Stu06b} and the \emph{measure contraction property}~$\MCP$,~\cite{Stu06b, Oht07b}.
As discussed in~\cite{Mil21}, the class of~$\QCD$ spaces notably includes large families of (ideal) sub-Riemann manifolds, thus aiming to provide a unified perspective of (non-smooth) Riemannian, Finsler, and sub-Riemann\-ian geometry.

In this note, we collect some metric-measure properties of a metric measure space~$(X,\mssd,\mssm)$ satisfying the~$\QCD$ condition.
As a main result, Theorem~\ref{t:Main}, we show the Sobolev-to-Lipschitz property, see~\ref{eq:SL} below.

In light of recent developments in metric analysis, the property~\ref{eq:SL} has turned out to be significant in relating differentiable and metric measure structures.
For instance,
\begin{itemize}
\item under~\ref{eq:SL}, the {Bakry--\'Emery (synthetic Ricci) curvature (lower) bound} $\BE$ is equivalent to the {Riemannian curvature-dimension condition} $\RCD$, Ambrosio--Gigli--Savar\'e \cite{AmbGigSav15}. 
The statement is sharp, in the sense that~$\BE$ without~\ref{eq:SL} does not imply~$\RCD$, Honda~\cite{Hon18};

\item together with~\ref{eq:SL}, the~$\BE$ condition implies the $L^\infty$-to-Lipschitz regularization of the heat semigroup, Ambrosio--Gigli--Savar\'e \cite{AmbGigSav14} (in the sub-Riemannian setting see Stefani~\cite{Ste20});

\item together with a Rademacher-type property for~$(X,\mssd,\mssm)$, see~\ref{eq:Rad} below,~\ref{eq:SL} implies the coincidence of the intrinsic distance and the given distance~$\mssd$, and also implies the integral Varadhan short-time asymptotic for the heat semigroup in a variety of settings (see~\cite[Thm.~4.25]{LzDSSuz20}), and furthermore, for the space of configurations (i.e., locally finite integer-valued point measures) over~$X$, see~\cite[Thm.~6.10]{LzDSSuz21}.
\end{itemize}

Apart from~\ref{eq:SL}, $\QCD$ spaces satisfy the local volume doubling, the Rademacher-type property~\ref{eq:Rad}, and the local versions \ref{eq:RadLoc} and~\ref{eq:SLLoc} of~\ref{eq:Rad} and~\ref{eq:SL} after~\cite{LzDSSuz20}, see~\S\ref{sec: RQCD}. 
When~$(X,\mssd,\mssm)$ is additionally infinitesimally Hilbertian, as an application of the Sobolev-to-Lipschitz property, we obtain:
\begin{itemize}
\item the coincidence of the distance~$\mssd$ with the intrinsic distance~$\mssd_{\Ch}$ of the Cheeger energy~$\Ch$ of~$(X,\mssd,\mssm)$, Theorem~\ref{t:Distances};
\item the integral Varadhan short-time asymptotic for the heat semigroup with respect to the Hausdorff distance induced by~$\mssd$, Theorem~\ref{t:Varadhan};
\item in the compact case, and assuming as well the measure contraction property~$\MCP$, the pointwise Varadhan short-time asymptotic for the heat semigroup with respect to the Hausdorff distance induced by~$\mssd$, Corollary~\ref{c:pVaradhan};
\item the irreducibility of the Dirichlet form~$\Ch$, Corollary~\ref{c:Irr}.
\end{itemize}
For these results, we make full use of fundamental relations between Dirichlet forms and metric measure spaces developed in~\cite{LzDSSuz20}. 

Regarding the irreducibility, we note that the same proof of Corollary~\ref{c:Irr} applies as well to~$\RCD(K,\infty)$ spaces with infinite volume measure, which seems not explicitly proved in the existing literature; see Remark~\ref{rem: IRCD} for a more detailed discussion. 

Our results on~$\QCD$ spaces may be specialized to ideal sub-Riemannian manifolds satisfying the quasi curvature-dimension condition, such as: (ideal generalized $H$-type) Carnot groups, Heisenberg groups, corank-$1$ Carnot groups, the Grushin plane, and several $H$-type foliations, Sasakian and $3$-Sasakian manifolds.

\section*{Declarations}
The authors have no conflicts of interest to declare that are relevant to the content of this article.

\section{Milman's Quasi Curvature-Dimension-Condition}
By a \emph{metric measure space}~$(X,\mssd,\mssm)$ we shall always mean a complete and separable metric space~$(X,\mssd)$, endowed with a Borel measure~$\mssm$ finite on $\mssd$-bounded sets and with full topological support.
In order to rule out trivial cases, we assume that~$\mssm$ is atomless, which makes~$X$ uncountable.
We say that~$(X,\mssd)$ is \emph{proper} if all closed balls are compact.
We let~$\mcC_c(X)$, resp.~$\Cz(X)$, $\Cb(X)$, be the space of continuous compactly supported, resp.\ continuous vanishing at infinity, continuous bounded, functions on~$X$.

We denote by~$\msP(X)$, resp.~$\msP_c(X)$,~$\msP^\mssm(X)$, the space of all Borel probability measures on~$(X,\mssd)$, resp.\ (additionally) compactly supported, (additionally) absolutely continuous w.r.t.~$\mssm$, and by
\begin{align*}
\msP_2(X)\eqdef \set{\mu\in\msP(X): \int_X \mssd(x,x_0)^2\diff\mu <\infty}
\end{align*}
the \emph{$L^2$-Wasserstein space} over~$(X,\mssd)$, endowed with the \emph{$L^2$-Wasserstein distance}
\begin{align}\label{eq:Wasserstein}
W_2(\mu_0,\mu_1)\eqdef \quadre{\inf_\pi\int_{X^\tym{2}} \mssd(x,y)^2\diff\pi(x,y)}^{1/2}\comma
\end{align}
the infimum running over all couplings~$\pi\in\msP(X^\tym{2})$ of~$(\mu_0,\mu_1)$.
We denote by~$\Opt(\mu_0,\mu_1)$ the set of minimizers in~\eqref{eq:Wasserstein}, always non-empty.

Set~$I\eqdef [0,1]$.
We write~$\Geo(X,\mssd)$ for the space of all constant-speed geodesics in~$(X,\mssd)$ parametrized on~$I$, itself a complete separable metric space when endowed with the supremum distance~$\mssd_\infty$ induced by~$\mssd$.
By Lisini's \emph{superposition principle}~\cite[Thm.~4]{Lis07} (cf.~\cite[Thm.~2.10]{AmbGig11}), every $W_2$-absolutely continuous curve~$\seq{\mu_t}_{t\in I}$ may be lifted to a dynamical plan~$\boldpi\in\msP(\mcC(I;X))$ satisfying~$(\ev_t)_\pfwd\boldpi=\mu_t$ for every~$t\in I$, where~$\ev_t\colon \gamma\mapsto \gamma_t$ is the evaluation map at time~$t$.
Furthermore, a curve~$\seq{\mu_t}_t$ is a $W_2$-geodesic if and only if~$\boldpi$ is concentrated on~$\Geo(X,\mssd)$ and
\begin{align}\label{eq:WassersteinGeodesic}
W_2(\mu_0,\mu_1)^2=\int_{\mcC(I;X)} \int_I \abs{\dot\gamma}_t^2\diff t \diff\boldpi(\gamma)\comma
\end{align}
in which case we say that~$\boldpi$ is an \emph{optimal dynamical plan connecting~$\mu_0$ and~$\mu_1$}. We write~$\OptGeo(\mu_0,\mu_1)$ for the set of all such plans.

Whenever~$(Y,\T)$ is a Polish space, the narrow topology~$\T_\mrmn$ on the space~$\msP(Y)$ of Borel probability measures on~$Y$ is defined as the topology induced by duality with continuous bounded functions on~$Y$.
Since~$(Y,\T)$ is Polish, $(\msP(Y),\T_\mrmn)$ is Polish as well, $\T_\mrmn$ is characterized by the convergence of sequences, and a sequence~$\seq{\mu_n}_n$ converges narrowly if and only if
\begin{align*}
\int_X f\diff\mu_n \xrightarrow{\ n\to \infty \ } \int_X f \diff\mu\comma \qquad f\in\Cb(X) \fstop
\end{align*}

We collect here for further reference the following standard fact.
\begin{prop}[Stability of dynamical optimality]\label{p:DynStability}
For~$i=0,1$ let~$\seq{\mu^n_i}_n\subset \msP_2(X)$ and fix~$\boldpi^n\in \OptGeo(\mu^n_0,\mu^n_1)$.
If~$\boldpi^n$ narrowly converges to~$\boldpi\in\msP(\mcC(I;X))$, then~$\boldpi$ is concentrated on~$\Geo(X,\mssd)$, and~$\boldpi\in\OptGeo\ttonde{(\ev_0)_\pfwd \boldpi, (\ev_1)_\pfwd \boldpi}$.
\begin{proof}
Consequence of the stability of $W_2$-optimality~\cite[Prop.~2.5]{AmbGig11}, the continuity of~$(\ev_0,\ev_1)\colon \Geo(X,\mssd)\to X^\tym{2}$, and the Continuous Mapping Theorem.
\end{proof}
\end{prop}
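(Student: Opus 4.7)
The plan is to reduce the statement to the stability of $W_2$-optimal couplings (cited as \cite[Prop.~2.5]{AmbGig11}) by pushing forward through the endpoint map, and to handle the geodesic concentration separately via Portmanteau.

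First I would verify that~$\Geo(X,\mssd)$ is closed in~$\mcC(I;X)$ with respect to uniform convergence. Indeed, a curve~$\gamma\in\mcC(I;X)$ belongs to~$\Geo(X,\mssd)$ if and only if $\mssd(\gamma_s,\gamma_t)=\abs{t-s}\mssd(\gamma_0,\gamma_1)$ for all $s,t\in I$, a condition stable under pointwise limits in~$X$ by continuity of~$\mssd$. Since each~$\boldpi^n$ is concentrated on~$\Geo(X,\mssd)$, Portmanteau's theorem applied to the narrowly convergent sequence~$\boldpi^n\to\boldpi$ yields $\boldpi\bigl(\Geo(X,\mssd)\bigr)\geq\limsup_n \boldpi^n\bigl(\Geo(X,\mssd)\bigr)=1$, proving the first assertion.

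For optimality, set $\mu_i\eqdef (\ev_i)_\pfwd \boldpi$ for $i=0,1$ and $\mu_i^n\eqdef (\ev_i)_\pfwd\boldpi^n$. Since the endpoint map $(\ev_0,\ev_1)\colon\mcC(I;X)\to X\tym{2}$ is continuous, the Continuous Mapping Theorem gives that the couplings $\pi^n\eqdef(\ev_0,\ev_1)_\pfwd \boldpi^n$ narrowly converge to $\pi\eqdef(\ev_0,\ev_1)_\pfwd \boldpi$, and in particular $\mu_i^n\to\mu_i$ narrowly for $i=0,1$. Because $\boldpi^n\in\OptGeo(\mu_0^n,\mu_1^n)$, the coupling~$\pi^n$ is $W_2$-optimal between $\mu_0^n$ and $\mu_1^n$. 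By the stability of $W_2$-optimality~\cite[Prop.~2.5]{AmbGig11}, the limit~$\pi$ is a $W_2$-optimal coupling of~$\mu_0$ and~$\mu_1$, whence in particular $\mu_0,\mu_1\in\msP_2(X)$ and
\begin{align*}
W_2(\mu_0,\mu_1)^2 = \int_{X\tym{2}} \mssd(x,y)^2\diff\pi(x,y) = \int_{\mcC(I;X)} \mssd(\gamma_0,\gamma_1)^2\diff\boldpi(\gamma)\fstop
\end{align*}

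Finally, since~$\boldpi$ is concentrated on $\Geo(X,\mssd)$, for $\boldpi$-a.e.\ $\gamma$ one has $\mssd(\gamma_0,\gamma_1)^2=\int_I\abs{\dot\gamma}_t^2\diff t$, so the right-hand side above equals $\int\int_I\abs{\dot\gamma}_t^2\diff t\diff\boldpi(\gamma)$, which is precisely~\eqref{eq:WassersteinGeodesic} and shows $\boldpi\in\OptGeo(\mu_0,\mu_1)$. The only nontrivial point is the closedness of~$\Geo(X,\mssd)$; everything else is a routine combination of Portmanteau, continuous pushforward, and the cited stability result.
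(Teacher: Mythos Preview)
Your proof is correct and follows essentially the same route as the paper's one-line argument: stability of $W_2$-optimality, continuity of the endpoint map, and the Continuous Mapping Theorem. You have simply made explicit the Portmanteau step for the concentration on~$\Geo(X,\mssd)$ (via its closedness) and unpacked the final identification with~\eqref{eq:WassersteinGeodesic}, both of which the paper leaves implicit.
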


\begin{defs}[Monge space,~{\cite[Dfn.~3.1]{Mil21}}]\label{d:Monge}
A metric measure space~$(X,\mssd,\mssm)$ is a \emph{Monge space} if for every~$\mu_0,\mu_1\in\msP_2(X)$ with~$\mu_0\ll \mssm$ the following holds:
\begin{enumerate}[$(a)$]
\item\label{i:d:Monge:1} there exists a unique optimal dynamical plan~$\boldpi\in \OptGeo(\mu_0,\mu_1)$, hence $\Opt(\mu_0,\mu_1)$ consists of the unique optimal plan~$(\ev_0,\ev_1)_\pfwd \boldpi$;
\item\label{i:d:Monge:2} $(X,\mssd,\mssm)$ has \emph{good transport behavior} (cf.~\cite[Dfn.~3.1]{Kel17}), i.e.~$\boldpi$ is induced by a map, viz.~$\boldpi=\mbfT_\pfwd\mu_0$ for some~$\mbfT\colon X\to \Geo(X,\mssd)$;
\item\label{i:d:Monge:3} $(X,\mssd,\mssm)$ has the \emph{strong interpolation property}~\cite[p.~523]{Kel17}, i.e.\ the optimal dynamical plan~$\boldpi$ in~\ref{i:d:Monge:2} satisfies~$\mu_t\eqdef (\ev_t)_\pfwd\boldpi\ll \mssm$ for all~$t\in [0,1)$.
\end{enumerate}

We write~$\rho_t$ for the Radon--Nikod\'ym density of~$\mu_t$ w.r.t.~$\mssm$.
\end{defs}

Let us now collect some properties of Monge spaces.

\begin{rem}
Every geodesic Monge space is ($2$-)\emph{essentially non-branching}~\cite[Dfn.~2.10]{Kel17} by~\cite[Prop.~3.6]{Kel17}.
\end{rem}

\begin{cor}\label{c:ContPi}
Let~$(X,\mssd,\mssm)$ be a geodesic Monge space and fix~$K_i\Subset X$, $i=0,1$. Then, the map
\begin{equation}\label{eq:Pi}
\Pi\colon (\mu_0,\mu_1)\longmapsto \boldpi\in\OptGeo(\mu_0,\mu_1)
\end{equation}
is a continuous map~$\msP^\mssm(K_0)\times\msP(K_1)\to \msP(\Geo(X,\mssd))$ when~$\msP^\mssm(K_0)\times\msP(K_1)$ is endowed with the product of the narrow topologies, and $ \msP(\Geo(X,\mssd))$ is endowed with the narrow topology.
\begin{proof}
Firstly, note that~$\Pi$ is well-defined by Definition~\ref{d:Monge}\ref{i:d:Monge:1}.
Secondly, recall that the space of geodesics
\begin{equation*}
\Geo(K_0,K_1)\eqdef \set{\gamma\in \Geo(X,\mssd): \ev_i \gamma\in K_i}
\end{equation*}
is a compact subset of~$\Geo(X,\mssd)$.
As a consequence,~$\msP(\Geo(K_0,K_1))$ is narrowly compact metrizable, and it suffices to show the continuity of~$\Pi$ along sequences.
To this end let~$\seq{\mu^n_0}_n\subset \msP^\mssm(K_0)$, and~$\seq{\mu^n_1}_n\subset\msP(K_1)$, be narrowly convergent to~$\mu_0\in\msP^\mssm(K_0)$, resp.~$\mu_1\in\msP(K_1)$.
Since~$\seq{\Pi(\mu^n_0,\mu^n_1)}_n\subset \msP(\Geo(K_0,K_1))$, it admits a non-relabeled narrowly convergent subsequence.
Let~$\boldpi$ be its limit, and note that~$\boldpi\in\msP(\Geo(X,\mssd))$ is an optimal dynamical plan by Proposition~\ref{p:DynStability}.
By continuity of~$(\ev_0,\ev_1)\colon \Geo(X,\mssd)\to X^\tym{2}$, the narrow convergence of~$\boldpi^n$ to~$\boldpi$, and the Continuous Mapping Theorem, we conclude that~$\boldpi\in \OptGeo(\mu_0,\mu_1)$.
Since the latter is a singleton by Definition~\ref{d:Monge}\ref{i:d:Monge:1}, we have therefore~$\boldpi=\Pi(\mu_0,\mu_1)$.
Since the subsequence was arbitrary, we have concluded that~$\nlim \Pi(\mu^n_0,\mu^n_1)=\nlim \boldpi^n=\boldpi=\Pi(\mu_0,\mu_1)$, which proves the assertion.
\end{proof}
\end{cor}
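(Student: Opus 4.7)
The plan is to verify sequential continuity — which suffices since the narrow topology on $\msP(\Geo(X,\mssd))$ is metrizable — and to follow the classical tightness/extraction/uniqueness scheme. Well-definedness of $\Pi$ on the prescribed domain is immediate from Definition~\ref{d:Monge}\ref{i:d:Monge:1}: the assumption $\mu_0\in\msP^\mssm(K_0)$ yields $\mu_0\ll\mssm$, so $\OptGeo(\mu_0,\mu_1)$ is a singleton. The geometric workhorse is the compactness of the ``geodesic bundle''
\[
\Geo(K_0,K_1) \eqdef \set{\gamma\in\Geo(X,\mssd) : \gamma_0\in K_0,\ \gamma_1\in K_1} \fstop
\]
Its elements are equi-Lipschitz with constant bounded by $\sup\set{\mssd(x,y):x\in K_0,\, y\in K_1}<\infty$ and remain in a bounded region of $X$; assuming $(X,\mssd)$ is proper (standard in the $\QCD$ setting under consideration), Arzel\`a--Ascoli yields compactness. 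Consequently $\msP(\Geo(K_0,K_1))$ is narrowly compact and metrizable.

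Given narrowly convergent sequences $\mu_0^n\to\mu_0$ in $\msP^\mssm(K_0)$ and $\mu_1^n\to\mu_1$ in $\msP(K_1)$, I would observe that the plans $\boldpi^n\eqdef\Pi(\mu_0^n,\mu_1^n)$ all lie in $\msP(\Geo(K_0,K_1))$, so some subsequence converges narrowly to a limit $\boldpi$. Proposition~\ref{p:DynStability} ensures $\boldpi$ is optimal for its own endpoints $(\ev_i)_\pfwd\boldpi$; the Continuous Mapping Theorem applied to the continuous evaluations $\ev_i\colon\Geo(X,\mssd)\to X$ identifies these endpoints as $\mu_0$ and $\mu_1$. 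The uniqueness clause of Definition~\ref{d:Monge}\ref{i:d:Monge:1}, available because $\mu_0\ll\mssm$, then forces $\boldpi=\Pi(\mu_0,\mu_1)$, and a standard subsequence argument upgrades this to convergence of the entire sequence.

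The most delicate point — and the \emph{only} place where the full Monge-space hypothesis is invoked — is the limit-identification step, which hinges on uniqueness and hence on $\mu_0\ll\mssm$. This explains the asymmetric domain $\msP^\mssm(K_0)\times\msP(K_1)$: without absolute continuity of at least one marginal, the argument could conclude only subsequential narrow convergence to \emph{some} optimal plan, not to a prescribed one. A secondary, purely technical matter is the compactness of $\Geo(K_0,K_1)$, which in a fully general complete and separable setting would need to be argued by tightness rather than by Arzel\`a--Ascoli; but in the presence of the $\QCD$ structure properness is at hand, so this is not a genuine obstacle.
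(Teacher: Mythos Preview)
Your proposal is correct and follows essentially the same argument as the paper: well-definedness from the Monge uniqueness clause, compactness of $\Geo(K_0,K_1)$ (hence narrow compactness and metrizability of $\msP(\Geo(K_0,K_1))$), extraction of a subsequential limit, identification of that limit via Proposition~\ref{p:DynStability} together with the Continuous Mapping Theorem for the evaluations, and the subsequence upgrade. Your explicit remark that compactness of $\Geo(K_0,K_1)$ relies on properness is a point the paper's proof leaves implicit; as you note, properness is available in the intended $\QCD$ application (Corollary~\ref{c:Proper}), so this is not an obstacle there.
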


The following is a consequence of the sole \emph{interpolation property}~\cite[Dfn.~4.2]{Kel17}.

\begin{cor}\label{c:ContinuitySet}
Let~$(X,\mssd,\mssm)$ be a geodesic Monge space. Then, every ball~$B\subset X$ is a continuity set for~$\mssm$, i.e.~$\mssm\, \partial B=0$.
In particular, the sphere
\[
S_r(x)\eqdef\set{y\in X: \mssd(x,y)=r}
\]
is $\mssm$-negligible for every~$x\in X$ and every~$r>0$.
\begin{proof}
Since~$(X,\mssd,\mssm)$ has the strong interpolation property (Dfn.~\ref{d:Monge}\iref{i:d:Monge:3}), it has in particular the interpolation property, and is therefore \emph{strongly non-degenerate}~\cite[Dfn.~4.4]{Kel17} by~\cite[Lem.~4.5]{Kel17}.
In particular, it is \emph{non-degenerate}, i.e., for every Borel~$A\subset X$ with~$\mssm A>0$ and every~$x\in X$ it holds that~$\mssm A_{t,x}>0$ for every~$t\in (0,1)$, where
\begin{align*}
A_{t,x}\eqdef \set{\gamma_t : \gamma\in \Geo(X,\mssd)\comma \gamma_0\in A, \gamma_1=x} \fstop
\end{align*}

Now, argue by contradiction that there exist~$x_0\in X$ and~$r_0>0$ with $\mssm S_{r_0}(x_0)>0$.
On the one hand, since~$\mssm$ is $\sigma$-finite we can find~$r\in (0,r_0)$ with~$\mssm S_r(x_0)=0$.
On the other hand, since~$S_{r}(x_0)=\ttonde{S_{r_0}(x_0)}_{t,x_0}$ for~$t\eqdef r/r_0\in (0,1)$, the non-degeneracy implies~$\mssm S_{r}(x_0)>0$, a contradiction.

Thus, every sphere~$S_r(x)\subset X$ is $\mssm$-negligible.
Since~$(X,\mssd)$ is geodesic,~$S_r(x)=\partial B_r(x)$, and the first assertion follows.
\end{proof}
\end{cor}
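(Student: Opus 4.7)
The plan is to reduce the claim about boundaries of balls to the $\mssm$-negligibility of spheres, and then to exploit the strong interpolation property built into the definition of a Monge space. For the first reduction, note that in any metric space one has~$\partial B_r(x)\subseteq S_r(x)$, so proving that all spheres are $\mssm$-negligible immediately implies~$\mssm\,\partial B_r(x)=0$ and hence the first assertion; the geodesic hypothesis in fact gives equality, but it is not needed for this direction.

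To prove that every sphere~$S_r(x_0)$ has zero mass, I would first pass from the strong interpolation property (Definition~\ref{d:Monge}\ref{i:d:Monge:3}) to the pointwise non-degeneracy statement of~\cite[Lem.~4.5]{Kel17}: for every Borel~$A\subset X$ with~$\mssm A>0$, every~$x\in X$, and every~$t\in(0,1)$, the $t$-interpolant set
\begin{equation*}
A_{t,x}\eqdef\set{\gamma_t : \gamma\in\Geo(X,\mssd),\ \gamma_0\in A,\ \gamma_1=x}
\end{equation*}
satisfies~$\mssm\,A_{t,x}>0$. The central geometric observation is that in a geodesic space, scaling the sphere of radius~$r_0$ toward its own centre~$x_0$ by the affine factor~$t=(r_0-r)/r_0\in(0,1)$ yields a subset of the concentric sphere of radius~$r$, since a unit-speed geodesic from a point of~$S_{r_0}(x_0)$ to~$x_0$ lies at distance~$(1-t)r_0=r$ from~$x_0$ at time~$t$. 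In symbols,
\begin{equation*}
\ttonde{S_{r_0}(x_0)}_{t,\,x_0}\subseteq S_r(x_0)\fstop
\end{equation*}

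The contradiction step then runs as follows. Assume some sphere~$S_{r_0}(x_0)$ carries positive~$\mssm$-mass. Since~$\mssm$ is finite on the bounded closed ball of radius~$r_0$ around~$x_0$ and the concentric spheres~$\set{S_r(x_0)}_{r\in(0,r_0)}$ are pairwise disjoint, only countably many radii~$r\in(0,r_0)$ can satisfy~$\mssm\,S_r(x_0)>0$; pick any other radius, for which~$\mssm\,S_r(x_0)=0$. Combining the displayed inclusion with non-degeneracy applied to~$A=S_{r_0}(x_0)$ and to the corresponding~$t\in(0,1)$ forces~$\mssm\,S_r(x_0)>0$, a contradiction. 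The only geometric subtlety lies in verifying the scaling inclusion, which relies precisely on the geodesic assumption; the rest is the packaged consequence of strong interpolation from~\cite{Kel17} together with a standard~$\sigma$-finiteness argument.
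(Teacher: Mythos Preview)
Your proposal is correct and follows essentially the same route as the paper: both arguments invoke~\cite[Lem.~4.5]{Kel17} to pass from the strong interpolation property to non-degeneracy, then run the identical contradiction via a concentric sphere of zero mass obtained from~$\sigma$-finiteness. Your write-up is in fact slightly sharper in two places: you use only the inclusion~$\ttonde{S_{r_0}(x_0)}_{t,x_0}\subseteq S_r(x_0)$ (which is all that is needed), and your time parameter~$t=(r_0-r)/r_0$ is the correct one for that inclusion---the paper's~$t=r/r_0$ is a harmless slip, since either choice lands in~$(0,1)$ and the family of zero-mass radii is co-countable.
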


The following generalization of Lott--Sturm--Villani curvature-dimension condition was recently introduced by E.~Milman,~\cite{Mil21}.
For~$K\in\R$,~$N\in (1,\infty)$ and~$t\in (0,1)$, denote by~$\tau^\sym{t}_{K,N}$ the \emph{dynamical distortion coefficient} of the model space of constant sectional curvature~$\frac{K}{N-1}$ and dimension~$\ceiling{N}$, e.g.~\cite[Eqn.~(2.2)]{Mil21}.

\begin{defs}[Quasi Curvature-Dimension Condition, {\cite[Dfn.s~2.3,2.8]{Mil21}}]
For $Q\geq 1$, $K\in\R$, and~$N\in (1,\infty)$, a geodesic Monge space~$(X,\mssd,\mssm)$ satisfies the \emph{quasi curvature-dimension condition}~$\QCD(Q,K,N)$ if, for every~$\mu_0,\mu_1\in\msP^\mssm_c(X)$,
\begin{align}\label{eq:QCD}
\rho_t^{-1/N}(\gamma_t) \geq Q^{-1/N} \tonde{\tau^\sym{1-t}_{K,N}\ttonde{\mssd(\gamma_0,\gamma_1)}\, \rho_0^{-1/N}(\gamma_0)+ \tau^\sym{t}_{K,N}\ttonde{\mssd(\gamma_0,\gamma_1)}\, \rho_1^{-1/N}(\gamma_1)}
\end{align}
for every~$t\in (0,1)$ and $\boldpi$-a.e.~$\gamma\in\Geo(X,\mssd)$.

We further say that~$(X,\mssd,\mssm)$ satisfies the \emph{regular quasi curvature-dimension condition} $\QCD_\reg(Q,K,N)$ if it satisfies~$\QCD(Q,K,N)$ for some~$Q,K,N$ as above and additionally the \emph{measure contraction property}~$\MCP(K',N')$ for some~$K'\in \R$ and~$N'\in (1,\infty)$.
\end{defs}

In the following, we omit the indices~$Q$,~$K$, and~$N$ whenever not relevant.
We refer to~\cite{Mil21} for a thorough discussion of examples of spaces satisfying the~$\QCD$ condition.
We stress that they include all~$\CD$ spaces (for the choice~$Q=1$, see~\cite{Mil21}), and various classes of sub-Riemannian manifolds satisfying~$\MCP$ (see~\cite[Prop.~2.4]{Mil21}).

Since the right-hand side of~\eqref{eq:QCD} depends on~$Q$ only by its linear dependence on the constant~$Q^{-1/N}$, the proof of the following result is readily adapted from the one of the analogous assertion under the curvature-dimension condition~$\CD$ in~\cite[Thm.~2.3]{Stu06b}.

For fixed~$x_0\in X$ and for every~$r>0$, set~$\overline{B}_r(x_0)\eqdef \set{x\in X: \mssd(x,x_0)\leq r}$, and
\begin{align*} 
v(r)\eqdef \mssm B_r(x_0) \qquad \text{and} \qquad s(r)\eqdef \limsup_{\delta\downarrow 0} \frac{1}{\delta} \, \mssm \ttonde{\overline{B}_{r+\delta}(x_0)\setminus B_r(x_0)} \fstop
\end{align*}
As customary, further define the model volume coefficient
\begin{align*}
s_{K,N}(r)\eqdef \begin{cases} \sin\tonde{\sqrt{\tfrac{K}{N-1}} \, r} & \text{if } K>0\\ r & \text{if } K=0\\ \sinh\tonde{\sqrt{\tfrac{-K}{N-1}} \, r} & \text{if } K<0
\end{cases}\fstop
\end{align*}

\begin{lem}[Generalized Bishop--Gromov inequality]\label{l:LocalDoubling}
Let~$(X,\mssd,\mssm)$ be a metric measure space satisfying~$\QCD(Q,K,N)$ for some~$Q\geq 1$,~$K\in\R$, and~$N\in (1,\infty)$. Then, for every~$0<r\leq R$ \emph{(}with~$R\leq \pi/\sqrt{K/(N-1)}$ if~$K>0$\emph{)},
\begin{align}\label{eq:Sturm:0a}
\frac{s(r)}{s(R)}\geq&\ Q^{-N}\, \tonde{\frac{s_{K,N}(r)}{s_{K,N}(R)}}^{N-1}\comma
\\
\label{eq:Sturm:0b}
\frac{v(r)}{v(R)} \geq&\ Q^{-N}\, \frac{\int_0^r s_{K,N}(t)^{N-1}\diff t}{\int_0^R s_{K,N}(t)^{N-1}\diff t} \fstop
\end{align}
\end{lem}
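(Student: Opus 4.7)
The plan is to follow the transport proof of Bishop--Gromov under $\CD$ in~\cite[Thm.~2.3]{Stu06b}, propagating the factor $Q^{-1/N}$ from~\eqref{eq:QCD} through the estimates. The strategy is to apply $\QCD$ to an optimal transport from an approximation of $\delta_{x_0}$ to a uniform measure on a thin annular shell around $x_0$, then to extract the volume comparison from the resulting pointwise bound on the density of the interpolating measure.

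Concretely, for $0 < r \le R$ (with $R \le \pi/\sqrt{K/(N-1)}$ if $K>0$) and small $\epsilon,\delta>0$, I would set $A_{R,\delta} \eqdef \overline{B}_{R+\delta}(x_0)\setminus B_R(x_0)$, whose topological boundary is $\mssm$-negligible by Corollary~\ref{c:ContinuitySet}, and consider the probability measures
\[
\mu_0^\epsilon \eqdef \mssm(B_\epsilon(x_0))^{-1}\, \mssm|_{B_\epsilon(x_0)} \comma \quad \mu_1^\delta \eqdef \mssm(A_{R,\delta})^{-1}\, \mssm|_{A_{R,\delta}} \fstop
\]
Both lie in $\msP^\mssm_c(X)$, so Definition~\ref{d:Monge} supplies a unique $\boldpi \in \OptGeo(\mu_0^\epsilon,\mu_1^\delta)$ with density $\rho_t$ for $\mu_t \eqdef (\ev_t)_\pfwd \boldpi$. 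Evaluating~\eqref{eq:QCD} at the interpolation time $t \eqdef r/R$, discarding the non-negative $\rho_0^\epsilon$-term, and raising to the $-N$-th power gives, for $\boldpi$-a.e.~$\gamma$,
\[
\rho_t(\gamma_t) \;\le\; Q \, \tau^t_{K,N}\bigl(\mssd(\gamma_0,\gamma_1)\bigr)^{-N}\, \mssm(A_{R,\delta})^{-1} \fstop
\]
Since $\gamma_0\in B_\epsilon(x_0)$ and $\gamma_1\in A_{R,\delta}$, the distance $\mssd(\gamma_0,\gamma_1)$ is within $O(\epsilon+\delta)$ of $R$, and by constant-speed parametrization $\supp\mu_t$ is asymptotically contained in $\overline{B}_{r+r\delta/R}(x_0)\setminus B_r(x_0)$ as $\epsilon\downarrow 0$.

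Integrating the density estimate against the mass constraint $\int \rho_t\diff\mssm = 1$, using the identity $\tau^t_{K,N}(R)^N = t\,(s_{K,N}(tR)/s_{K,N}(R))^{N-1}$ with $tR=r$, dividing by the shell width $r\delta/R$, and taking the iterated limits $\epsilon\downarrow 0$ followed by $\delta\downarrow 0$, I expect to recover \eqref{eq:Sturm:0a}, with the $Q$-dependence emerging from the successive $-N$-th powers exactly as in the original $\CD$ argument. Inequality \eqref{eq:Sturm:0b} then follows from \eqref{eq:Sturm:0a} by the classical radial integration: rewriting \eqref{eq:Sturm:0a} as the quasi-monotonicity of $r\mapsto s(r)/s_{K,N}(r)^{N-1}$, integrating in $r$, and using that $v$ is the primitive of $s$ off an $\mssm$-null set of radii (guaranteed by Corollary~\ref{c:ContinuitySet}), one deduces the analogous quasi-monotonicity of $r\mapsto v(r)/\int_0^r s_{K,N}(\rho)^{N-1}\diff\rho$, whence \eqref{eq:Sturm:0b}.

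The principal obstacle will be the precise localization of $\supp\mu_t$ inside the expected shell in the limit $\epsilon\downarrow 0$: one needs that no appreciable mass of $\mu_t$ escapes to the spherical boundaries, which is exactly what the essential non-branching consequence of the Monge property, together with Corollary~\ref{c:ContinuitySet} (spheres are $\mssm$-null), are designed to ensure. The remaining bookkeeping is identical to the $\CD$ setting, the only modification being the linear carry of the factor $Q^{-1/N}$ from \eqref{eq:QCD}.
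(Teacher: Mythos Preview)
Your proposal is correct and follows essentially the same route as the paper: both adapt Sturm's~\cite[Thm.~2.3]{Stu06b} by transporting $\mu_0^\eps$ on $B_\eps(x_0)$ to the uniform measure on the shell $\overline{B}_{R+\delta R}(x_0)\setminus B_R(x_0)$ at time $t=r/R$, localize the interpolant in a thin shell near radius~$r$, and send $\eps\downarrow 0$ then $\delta\downarrow 0$, invoking Corollary~\ref{c:ContinuitySet} to pass to the difference quotients. The only cosmetic difference is that the paper first packages~\eqref{eq:QCD} as a $Q$-weighted Brunn--Minkowski inequality (following~\cite[Prop.~2.1]{Stu06b} verbatim) and then applies it to the two sets, whereas you apply the density estimate~\eqref{eq:QCD} directly and integrate $\rho_t\leq Q\,\tau^{\sym{t}}_{K,N}(\cdot)^{-N}\mssm(A_{R,\delta})^{-1}$ over $\supp\mu_t$; these are the same computation.
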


\begin{rem*}
If~$Q>1$, then~\eqref{eq:Sturm:0b} is not sufficient to conclude that~$v(r)/v(R)\to 1$ as~$r\to R$.
In particular, this implies that the assertion of Corollary~\ref{c:ContinuitySet} does not follow from~\eqref{eq:Sturm:0b} in the obvious way, which makes the Corollary non-void.
\end{rem*}

\begin{proof}[Proof of Lemma~\ref{l:LocalDoubling}]
Let~$A_0,A_1$ be Borel subsets of~$X$ with~$\mssm A_0, \mssm A_1>0$, and set
\begin{align*}
A_{t}\eqdef \set{\gamma_t : \gamma\in\Geo(X,\mssd)\comma \gamma_i\in A_i\comma i=0,1} \fstop
\end{align*}
Following \emph{verbatim} the proof of~\cite[Prop.~2.1]{Stu06b} yields the following $Q$-weighted version of the generalized Brunn--Minkowski inequality:
\begin{align}\label{eq:BrunnMinkowski}
(\mssm A_t)^{1/N}\geq Q^{-1} \tonde{\tau^{\sym{1-t}}_{K,N} (\Theta)\, (\mssm A_0)^{1/N}+ \tau^{\sym{t}}_{K,N} (\Theta)\, (\mssm A_1)^{1/N}}\comma
\end{align}
where
\begin{align*}
\Theta\eqdef \begin{cases} \inf_{x_i\in A_i} \mssd(x_0,x_1) & \text{if } K\geq 0\\ \sup_{x_i\in A_i}\mssd(x_0,x_1) &\text{if } K<0\end{cases} \fstop
\end{align*}

We apply~\eqref{eq:BrunnMinkowski} to~$A_0\eqdef B_\eps(x_0)$ and~$A_1\eqdef \overline{B}_{R+\delta R}(x_0)\setminus B_R(x_0)$ for some~$\eps,\delta>0$, fixed~$0<r<R$, and with~$t\eqdef r/R$.
It is readily seen that
\begin{align*}
A_t\subset \overline{B}_{r+\delta r+\eps r/R}(x_0)\setminus B_{r-\eps r/R}(x_0) \qquad \text{and} \qquad R-\eps\leq \Theta \leq R+\delta R+\eps \fstop
\end{align*}
Thus, by~\eqref{eq:BrunnMinkowski},
\begin{align*}
\mssm&\ttonde{\overline{B}_{r+\delta r+\eps r/R}(x_0)\setminus B_{r-\eps r/R}(x_0)}^{1/N}\geq
\\
&\qquad Q^{-1} \tau^\sym{1-r/R}_{K,N} (R\mp \delta R \mp \eps) \ttonde{\mssm B_\eps(x_0)}^{1/N}
\\
&\qquad\quad+Q^{-1}\tau^{\sym{r/R}}_{K,N}(R\mp \delta R\mp \eps) \, \mssm\ttonde{\overline{B}_{R+\delta R}(x_0)\setminus B_R(x_0)}^{1/N}
\end{align*}
where~$\mp$ is chosen to coincide with~$\sgn(K)$. Letting~$\eps \to 0$,
\begin{align*}
\mssm&\ttonde{\overline{B}_{(1+\delta)r}(x_0)\setminus B_r(x_0)}^{1/N}\geq Q^{-1}\tau^{\sym{r/R}}_{K,N}\ttonde{(1\mp \delta) R}\,  \mssm\ttonde{\overline{B}_{R+\delta R}(x_0)\setminus B_R(x_0)}^{1/N} \fstop
\end{align*}

Since~$\mssm$ does not charge spheres by Corollary~\ref{c:ContinuitySet}, we may rewrite the above inequality as
\begin{align*}
v\ttonde{(1+\delta)r}-v(r) \geq Q^{-N}\, \tau^{\sym{r/R}}_{K,N}\ttonde{(1\mp\delta)R}^N \tonde{v\ttonde{(1+\delta)R}-v(R)} \comma
\end{align*}
hence, making explicit the definition of the distortion coefficients,
\begin{align}\label{eq:Sturm:1}
\frac{v\ttonde{(1+\delta)r}-v(r)}{\delta r}\geq Q^{-N} \frac{v\ttonde{(1+\delta)R}-v(R)}{\delta R} \tonde{\frac{s_{K,N}\ttonde{(1\mp \delta)r}}{s_{K,N}\ttonde{(1\mp \delta)R}}}^{N-1} \fstop
\end{align}
Letting~$\delta\to 0$ in~\eqref{eq:Sturm:1} proves~\eqref{eq:Sturm:0a}.
The inequality~\eqref{eq:Sturm:0b} now follows from~\eqref{eq:Sturm:0a} exactly as in the proof of~\cite[Thm.~2.3]{Stu06b}.
\end{proof}

\blue{Recall that a metric measure space~$(X,\mssd,\mssm)$ is \emph{locally doubling} if for every~$x\in X$ there exists an open set~$U\ni x$ and constants~$C,R>0$ so that
\[
\mssm B_{2r}(y)\leq C \mssm B_r(y)\comma \qquad r\in (0,R)\comma \qquad y\in U\fstop
\]
Following~\cite[Dfn.~3.18]{GigHan18}, we further say that~$(X,\mssd,\mssm)$ is \emph{a.e.-locally doubling} if there exists an $\mssm$-negliglible set~$N\subset X$ so that~$X\setminus N$ is locally doubling when endowed with the restriction of~$\mssd$ and~$\mssm$.
}

As standard corollaries of Lemma~\ref{l:LocalDoubling}, we further have the following properties:

\begin{cor}\label{c:LocalDoubling}
Every $\QCD$ space is locally doubling.
\end{cor}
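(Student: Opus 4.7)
The plan is to read off the doubling estimate directly from the generalized Bishop--Gromov inequality~\eqref{eq:Sturm:0b} applied at a generic base point. Note first that Lemma~\ref{l:LocalDoubling} is stated in terms of a fixed point~$x_0$, but nothing in its proof distinguishes that point from any other: for an arbitrary~$y\in X$, the quantities $v_y(r)\eqdef\mssm B_r(y)$ satisfy
\[
\frac{v_y(r)}{v_y(R)}\geq Q^{-N}\,\frac{\int_0^r s_{K,N}(t)^{N-1}\diff t}{\int_0^R s_{K,N}(t)^{N-1}\diff t}
\]
whenever~$0<r\leq R$ (and~$R\leq \pi\sqrt{(N-1)/K}$ if~$K>0$).

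Specialising this inequality to~$R\eqdef 2r$ and rearranging yields
\[
\mssm B_{2r}(y)\leq Q^N\,\rho_{K,N}(r)\,\mssm B_r(y)\comma\qquad \rho_{K,N}(r)\eqdef \frac{\int_0^{2r} s_{K,N}(t)^{N-1}\diff t}{\int_0^{r} s_{K,N}(t)^{N-1}\diff t}\fstop
\]
The function~$\rho_{K,N}$ is continuous on its domain of definition. Since~$s_{K,N}(t)=t+O(t^3)$ as~$t\downarrow 0$, one has~$\lim_{r\downarrow 0}\rho_{K,N}(r)=2^N$, and so~$\rho_{K,N}$ is bounded on any interval~$(0,R_0]$ with~$R_0$ small enough (and~$2R_0<\pi\sqrt{(N-1)/K}$ in the case~$K>0$).

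Fixing any such~$R_0$, set~$C\eqdef Q^N\sup_{r\in(0,R_0)}\rho_{K,N}(r)<\infty$. Then for every~$x\in X$, taking~$U\eqdef X$ (or any open neighborhood of~$x$) and~$R\eqdef R_0$, we obtain~$\mssm B_{2r}(y)\leq C\,\mssm B_r(y)$ for all~$y\in U$ and~$r\in(0,R)$, which is the desired local doubling property. There is no substantive obstacle here: the only care needed is in the case~$K>0$, where~$R_0$ must be chosen so that~$2R_0$ lies below the bound~$\pi\sqrt{(N-1)/K}$ under which Lemma~\ref{l:LocalDoubling} applies.
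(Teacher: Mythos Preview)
Your proof is correct and is precisely the standard derivation the paper has in mind: the paper does not write out a proof but simply records Corollary~\ref{c:LocalDoubling} as a ``standard corollary'' of the generalized Bishop--Gromov inequality~\eqref{eq:Sturm:0b}, and your argument---specializing~\eqref{eq:Sturm:0b} to~$R=2r$ at an arbitrary base point and bounding the resulting ratio of model volumes on a small interval---is exactly this standard step.
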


\begin{cor}\label{c:Proper}
Every $\QCD$ space is proper.
\end{cor}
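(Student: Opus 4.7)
My plan is to reduce the problem to local compactness and then invoke Hopf--Rinow. Recall that a complete, locally compact, geodesic metric space is automatically proper; since $(X,\mssd)$ is complete by the standing assumption on metric measure spaces and geodesic by the very definition of $\QCD$ (geodesic Monge space), it suffices to show that every $x_0 \in X$ admits a compact neighborhood. I shall produce one in the form of a sufficiently small closed ball $\overline{B}_{r_0}(x_0)$, shown to be totally bounded and hence compact by completeness.

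Fix $x_0 \in X$ and pick $r_0 > 0$ with $2r_0 < \pi \sqrt{(N-1)/K}$ if $K > 0$ (no restriction otherwise). Given $\varepsilon \in (0, r_0)$, let $\{y_i\}_{i=1}^n \subset \overline{B}_{r_0}(x_0)$ be maximal $\varepsilon$-separated. Then the open balls $B_{\varepsilon/2}(y_i)$ are pairwise disjoint and contained in $B_{r_0 + \varepsilon/2}(x_0)$, a set of finite $\mssm$-measure by the standing assumption that $\mssm$ is finite on $\mssd$-bounded sets.

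The quantitative step is to apply the Bishop--Gromov inequality \eqref{eq:Sturm:0b} at each center $y_i$, with inner radius $\varepsilon/2$ and outer radius $2r_0 + \varepsilon/2$ (admissible by our choice of $r_0$). The triangle inequality and $\mssd(y_i,x_0)\leq r_0$ yield $B_{2r_0 + \varepsilon/2}(y_i) \supset \overline{B}_{r_0}(x_0)$, and the latter has strictly positive $\mssm$-measure since $\mssm$ has full topological support. Hence
\[
\mssm B_{\varepsilon/2}(y_i) \;\geq\; c(\varepsilon, r_0, Q, K, N)\cdot \mssm \overline{B}_{r_0}(x_0)
\]
uniformly in $i$, with $c>0$. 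Combining with the disjointness bound $\sum_i \mssm B_{\varepsilon/2}(y_i) \leq \mssm B_{r_0 + \varepsilon/2}(x_0) < \infty$ produces a finite upper bound on $n$, depending only on $\varepsilon$, $r_0$, $Q$, $K$, $N$, and the $\mssm$-measures of two fixed balls. Thus $\overline{B}_{r_0}(x_0)$ is totally bounded, hence compact.

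The only delicate point is respecting the admissibility range of \eqref{eq:Sturm:0b} in the case $K>0$, which is why the argument must be carried out locally; global properness is then recovered from local compactness through Hopf--Rinow. I expect no conceptual obstacle beyond this bookkeeping.
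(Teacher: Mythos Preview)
Your proposal is correct and is precisely the standard argument the paper has in mind: the authors do not spell out a proof of Corollary~\ref{c:Proper} at all, merely listing it (together with local doubling) as a ``standard corollary'' of the generalized Bishop--Gromov inequality in Lemma~\ref{l:LocalDoubling}. Your total-boundedness-plus-Hopf--Rinow route is exactly how one unpacks that; the only cosmetic fix is that since the outer radius you feed into~\eqref{eq:Sturm:0b} is $2r_0+\varepsilon/2<\tfrac{5}{2}r_0$, your initial constraint in the case $K>0$ should be, say, $3r_0<\pi\sqrt{(N-1)/K}$ rather than $2r_0<\pi\sqrt{(N-1)/K}$, which you already flag as bookkeeping.
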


\section{The Sobolev-to-Lipschitz property on $\QCD$-spaces}
Let~$(X,\mssd,\mssm)$ be a metric measure space.
We denote by~$\Li{f}$ the global Lipschitz constant of a Lipschitz function~$f\colon X\to\R$, and by~$\Lip(\mssd)$, resp.~$\Lip_{bs}(\mssd)$, the space of all Lipschitz functions, resp.\ (additionally) with bounded support, on~$(X,\mssd)$.
We briefly recall the definition of Cheeger energy of a metric measure space. For a function~$f\in \Lip(\mssd)$, define the slope of~$f$ at~$x$ by
\begin{align*}
\slo{f}(x)\eqdef \limsup_{y\rar x} \frac{\abs{f(y)-f(x)}}{\mssd(x,y)}\comma
\end{align*}
where, conventionally,~$\slo{f}(x)=0$ if $x$ is isolated.
The \emph{Cheeger energy}~\cite[Eqn.\ (4.11)]{AmbGigSav14} on~$(X,\mssd,\mssm)$ is the functional
\begin{align*}
\Ch[\mssd,\mssm](f)\eqdef \inf\set{\nliminf \int_{X} \slo{f_n}^2 \diff \mssm: f_n\in \Lip_{bs}(\mssd)\comma L^2(\mssm)\text{-}\nlim f_n=f} \comma
\end{align*}
where, conventionally, $\inf\emp\eqdef+\infty$.
We denote the domain of $\Ch[\mssd,\mssm]$ by
\begin{align*}
W^{1,2}= W^{1,2}(X, \mssd. \mssm)\eqdef \set{f \in L^2(X, \mssm): \Ch[\mssd,\mssm](f)<\infty}\fstop
\end{align*}
We recall that a metric measure space~$(X,\mssd,\mssm)$ is called \emph{infinitesimally Hilbertian} if~$\Ch[\mssd,\mssm]$ is quadratic.
Introduced by N.~Gigli in~\cite[Dfn.~4.19]{Gig12a}, this notion has ever since proven to be a key tool in the study of non-smooth metric measure spaces.

When~$(X,\mssd,\mssm)$ is infinitesimally Hilbertian,~$\Ch[\mssd,\mssm]$ is a strongly local Dirichlet form having square-field operator~$\slo[w]{f}^2$, where $\slo[w]{f}$ is called the weak minimal upper gradient, satisfying ---~by construction~--- the \emph{Rademacher-type property}:
\begin{equation}\tag*{$(\mathsf{Rad})$}\label{eq:Rad}
\Lip_{bs}(\mssd) \subset W^{1,2}\comma \qquad \slo[w]{f} \le \Li{f} \as{\mssm}\comma
\end{equation}
where~$\Lip_{bs}$ denotes the space of Lipschitz functions with bounded support.

The following property has been considered in a variety of non-smooth settings, including e.g.\ configuration spaces~\cite{RoeSch99}, or general metric measure spaces~\cite[Dfn.~4.9]{Gig13}.

\begin{defs}[Sobolev-to-Lipschitz property]
We say that a metric measure space~$(X,\mssd,\mssm)$ satisfies the \emph{Sobolev-to-Lipschitz property} if
\begin{equation}\tag*{$(\mathsf{SL})$}\label{eq:SL}
\begin{gathered}
\text{each~$f\in W^{1,2}\cap L^\infty(\mssm)$ with~$\slo[w]{f}\leq 1$}
\\
\text{has a Lipschitz $\mssm$-modification~$\rep f$ with~$\Li{\rep f} \leq 1$}\fstop
\end{gathered}
\end{equation}
\end{defs}

\blue{
\begin{rem}
The Sobolev-to-Lipschitz property is more commonly phrased without the requirement that~$f\in L^\infty(\mssm)$.
In fact, this is equivalent to~\ref{eq:SL}.
\begin{proof}
Let~$f\in W^{1,2}$ with~$\slo[w]{f}\leq 1$, and set~$f_r\eqdef -r \vee f \wedge r$ for each $r>0$.
By locality of~$\slo[w]{\emparg}$ we have that~$\slo[w]{f_r}\leq 1$ $\mssm$-a.e.\ for every~$r$, hence~$f_r$ has a Lipschitz $\mssm$-modification~$\rep f_r$ with~$\Li{\rep f}\leq 1$, by~\ref{eq:SL}.
Since~$\rep f_r$ is continuous and $\mssm$ has full support, we have that~$\rep f_r\equiv \rep f_s$ everywhere on~$\tset{x\in X:\ttabs{\rep f_s(x)}\leq r}$ for every~$s\geq r$.
We conclude letting~$r\to\infty$.
\end{proof}
\end{rem}

\begin{rem}[On the Rademacher-type and Sobolev-to-Lipschitz properties]
As anticipated above, the Rademacher-type property for~$\Ch[\mssd,\mssm]$ holds \emph{by construction} on \emph{every} metric measure space (even without the assumption of infinitesimal Hilbertianity).
However the Rademacher-type property becomes non-trivial in the general setting of strongly local Dirichlet forms, i.e.\ when~$\Ch[\mssd,\mssm]$ is replaced by any such Dirichlet form~$\mcE$ on~$L^2(\mssm)$.
In this case, the Sobolev-to-Lipschitz property and the Rademacher-type property may be regarded as converse to each other.
For many comments and examples on both properties in this setting, see e.g.~\cite[\S\S3-4]{LzDSSuz20}.

Concerning our terminology, we ought to stress that the Rademacher-type property~\ref{eq:Rad} we defined here does not entail any (strong) differentiability nor any G\^{a}teaux (i.e.\ directional) differentiability of the function involved.
Indeed, even phrasing any such concept of (G\^{a}teaux) differentiability on general metric measure spaces would be highly non-trivial.
While immediate on metric measure spaces (as discussed above), the property~\ref{eq:Rad} is highly non-trivial on general Dirichlet spaces when~$\slo{\emparg}$ is replaced by the square field operator (or even by the energy measure) of a strongly local Dirichlet form, see e.g.~\cite{Kuw96,Stu94,LzDSSuz20}.
In this case, proofs of strong notions of differentiability of Lipschitz functions are available in specific smooth and non-smooth settings, which invariably rely on a combination of G\^{a}teaux differentiability along `sufficiently many' directions together with the uniform bound on such derivatives, provided by~\ref{eq:Rad}; see e.g.~\cite{NekZaj88} for Euclidean spaces,~\cite{BogMay96,EncStr93} for Wiener and Banach spaces,~\cite{RoeSch99} for configuration spaces,~\cite{LzDS19b} for Wasserstein spaces, etc..
\end{rem}
}


In order to discuss the Sobolev-to-Lipschitz property on~$\QCD$ spaces, we recall the following definition by N.~Gigli and~B.-X.~Han,~\cite{GigHan18}.
Firstly, recall from~\cite[Dfn.~5.1]{AmbGigSav14} that a dynamical plan~$\boldpi\in\msP(\mcC(I;X))$ is a \emph{test plan} if
\begin{itemize}
\item it is concentrated on the family~$\AC^2(I;X)$ of $2$-absolutely continuous curves;
\item it has finite $2$-energy, i.e.\ the right-hand side of~\eqref{eq:WassersteinGeodesic} is finite;
\item it has \emph{bounded compression}, viz.~$(\ev_t)_\pfwd \boldpi\leq C\mssm$ for some contant~$C>0$ independent of~$t\in I$.
\end{itemize}

\begin{defs}[Measured-length space,~{\cite[Dfn.~3.16]{GigHan18}}]\label{d:GigHan}
A metric measure space $(X,\mssd, \mssm)$ is a \emph{measured-length space} if there exists an $\mssm$-co-negligible subset~$\Omega\subset X$ with the following property. For~$i=0,1$ and every~$x_i\in \Omega$ there exists~$\eps\eqdef\eps(x_0,x_1)>0$ such that for each~$\eps_i\in (0,\eps]$ there exists a test plan~$\boldpi^{\eps_0,\eps_1}\in\msP(\mcC(I, X))$ with the following properties:
\begin{enumerate}[$(a)$]
\item\label{i:d:GigHan:1} the map
\begin{equation}\label{eq:d:GigHan:1}
(0,\eps]^\tym{2}\ni(\eps_0,\eps_1)\longmapsto \boldpi^{\eps_0,\eps_1}
\end{equation}
is weakly Borel measurable, viz.
\begin{align*}
(\eps_0,\eps_1)\longmapsto \int \phi \diff\boldpi^{\eps_0,\eps_1} ~\textrm{is Borel measurable}~\qquad \phi\in \Cb(\mcC(I,X)) \semicolon
\end{align*}

\item\label{i:d:GigHan:2} letting~$\ev_t\colon \mcC(I,M)\ni\gamma\mapsto \gamma_t\in M$ be the evaluation map on curves, it holds that
\begin{align*}
(\ev_i)_\pfwd \boldpi^{\eps_0,\eps_1}=\frac{\car_{B_{\eps_i}(x_i)}}{\mssm B_{\eps_i(x_i)}} \comma \qquad \eps_i\in (0,\eps] \semicolon
\end{align*}

\item\label{i:d:GigHan:3} we have that
\begin{align*}
\limsup_{\eps_0,\eps_1\downarrow 0} \iint_I \abs{\dot\gamma}^2 \diff t \diff \boldpi^{\eps_0,\eps_1}(\gamma) \leq \mssd(x_0,x_1)^2 \fstop
\end{align*}
\end{enumerate}
\end{defs}

\begin{prop}[{\cite[Prop.~3.19]{GigHan18}}]\label{p:GigHan}
Every a.e.-locally doubling measured-length \linebreak space satisfies~\ref{eq:SL}.
\end{prop}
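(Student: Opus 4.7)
The plan is to construct a $1$-Lipschitz representative~$\rep f$ of a given~$f\in W^{1,2}\cap L^\infty(\mssm)$ with~$\slo[w]{f}\leq 1$ $\mssm$-a.e.\ by defining it through Lebesgue averages and controlling its pointwise oscillation via the test plans provided by the measured-length property. First, by the a.e.-local doubling assumption the Lebesgue differentiation theorem applies on an $\mssm$-co-negligible set~$\Omega'\subset X$, so I would set
\begin{equation*}
\rep f(x)\eqdef \lim_{r\downarrow 0}\frac{1}{\mssm B_r(x)}\int_{B_r(x)} f\diff\mssm\comma \qquad x\in \Omega'\fstop
\end{equation*}
Let~$\Omega\subset X$ be the co-negligible set provided by Definition~\ref{d:GigHan}. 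For fixed~$x_0,x_1\in \Omega\cap\Omega'$ and~$\eps_i\in(0,\eps(x_0,x_1)]$, property~\ref{i:d:GigHan:2} of the test plans~$\boldpi^{\eps_0,\eps_1}$ gives
\begin{equation*}
\int f\diff(\ev_i)_\pfwd\boldpi^{\eps_0,\eps_1}=\frac{1}{\mssm B_{\eps_i}(x_i)}\int_{B_{\eps_i}(x_i)} f\diff\mssm\xrightarrow{\eps_i\downarrow 0}\rep f(x_i)\comma\qquad i=0,1 \fstop
\end{equation*}

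The crucial step is the Sobolev-along-curves inequality: since~$f\in W^{1,2}$ and~$\boldpi^{\eps_0,\eps_1}$ is a test plan in the sense of~\cite{AmbGigSav14}, for $\boldpi^{\eps_0,\eps_1}$-a.e.~$\gamma$ the composition~$f\circ\gamma$ is absolutely continuous with~$\abs{(f\circ\gamma)'(t)}\leq \slo[w]{f}(\gamma_t)\abs{\dot\gamma}_t$ for a.e.~$t\in I$. Integrating against~$\boldpi^{\eps_0,\eps_1}$, applying Cauchy--Schwarz, and using bounded compression to transfer the $\mssm$-a.e.\ bound~$\slo[w]{f}\leq 1$ to a~$(\ev_t)_\pfwd\boldpi^{\eps_0,\eps_1}$-a.e.\ bound yields
\begin{align*}
&\abs{\int f\diff(\ev_1)_\pfwd\boldpi^{\eps_0,\eps_1}-\int f\diff(\ev_0)_\pfwd\boldpi^{\eps_0,\eps_1}} \\
&\qquad\leq \tonde{\int\int_I \slo[w]{f}^2(\gamma_t)\diff t\diff\boldpi^{\eps_0,\eps_1}}^{1/2} \tonde{\int\int_I \abs{\dot\gamma}_t^2\diff t\diff\boldpi^{\eps_0,\eps_1}}^{1/2}\fstop
\end{align*}
The first factor is at most~$1$, since each pushforward~$(\ev_t)_\pfwd\boldpi^{\eps_0,\eps_1}$ is a probability measure on which~$\slo[w]{f}^2\leq 1$ a.e., while the second factor has~$\limsup\leq \mssd(x_0,x_1)$ as~$\eps_0,\eps_1\downarrow 0$ by property~\ref{i:d:GigHan:3}.

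Passing to the limit~$\eps_i\downarrow 0$ and combining the two bounds produces~$\abs{\rep f(x_1)-\rep f(x_0)}\leq \mssd(x_0,x_1)$ on~$\Omega\cap\Omega'$. Since~$\mssm$ has full topological support the latter set is dense in~$X$, so~$\rep f$ extends uniquely to a $1$-Lipschitz function on all of~$X$ coinciding with~$f$ $\mssm$-a.e., which is precisely~\ref{eq:SL}. The principal technical obstacle I anticipate is the rigorous justification of the Sobolev-along-curves inequality underlying the estimate: it is the delicate bridge between the globally-defined weak minimal upper gradient~$\slo[w]{f}$ (arising from the relaxation of Lipschitz slopes in the definition of~$\Ch[\mssd,\mssm]$) and the pointwise behavior of~$f$ along curves carried by a test plan, and requires invoking the equivalence of this notion with the test-plan-based upper gradient established in~\cite{AmbGigSav14}. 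A secondary concern is the weak Borel measurability required in~\ref{i:d:GigHan:1}, which legitimates the Fubini-type manipulations implicit in the estimate.
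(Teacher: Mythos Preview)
The paper does not provide its own proof of this proposition; it is quoted directly from~\cite[Prop.~3.19]{GigHan18}. Your outline is correct and is essentially the argument given in that reference: define the representative via Lebesgue averages (valid on an $\mssm$-co-negligible set by a.e.-local doubling), and control the oscillation between two Lebesgue points by testing~$f$ along the plans~$\boldpi^{\eps_0,\eps_1}$, using the upper-gradient inequality along test plans together with bounded compression and Cauchy--Schwarz, then invoking property~\iref{i:d:GigHan:3} as~$\eps_0,\eps_1\downarrow 0$.

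One small remark: your closing concern about the weak Borel measurability~\iref{i:d:GigHan:1} is unnecessary here. Your estimate uses, for each fixed pair~$(\eps_0,\eps_1)$, a single test plan~$\boldpi^{\eps_0,\eps_1}$, and you then take a limit along~$\eps_0,\eps_1\downarrow 0$; no integration in~$(\eps_0,\eps_1)$ is performed, so no Fubini-type argument enters. Condition~\iref{i:d:GigHan:1} is included in the definition for other applications in~\cite{GigHan18} (where one does average the plans) but plays no role in the proof of~\ref{eq:SL}.
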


\begin{thm}\label{t:Main}
Every $\QCD$ space is a measured-length space.
\begin{proof}
Let~$x_i\in X$,~$i=0,1$, and assume~$x_0\neq x_1$.

\paragraph{Definition of~$\boldpi$} Set~$\eps\eqdef \mssd(x_0,x_1)/4>0$ and, for~$\eps_i<\eps$, let
\begin{align}\label{eq:ApproximatingMeasures}
\mu^{\eps_i}\eqdef \frac{\car_{B_{\eps_i}(x_i)}}{\mssm B_{\eps_i}(x_i)}\cdot\mssm \comma
\end{align}
and
\begin{align*}
G^{\eps_0,\eps_1}\eqdef \set{\gamma\in\Geo(X,\mssd), \gamma_i\in B_{\eps_i}(x_i), i=0,1}\fstop
\end{align*}

By definition of~$\eps_0,\eps_1,\eps$, the sets~$\supp \mu^{\eps_0}$ and~$\supp \mu^{\eps_1}$ are well-separated, and compact since~$(X,\mssd)$ is proper.
Therefore, by Definition~\ref{d:Monge}\iref{i:d:Monge:1}, there exists an optimal dynamical plan~$\boldpi^{\eps_0,\eps_1}\in \OptGeo(\mu^{\eps_0},\mu^{\eps_1})$.

\paragraph{Properties of~$\boldpi$}
Note that~$\boldpi^{\eps_0,\eps_1}$ is concentrated on~$G^{\eps_0,\eps_1}$.
By~\cite[Thm.~5]{Lis07}, every optimal dynamical plan is concentrated on~$\AC^2(I;X)$ and has finite $2$-energy.
Therefore, an optimal dynamical plan is a test plan if and only if it has bounded compression.
Let us show that~$\boldpi^{\eps_0,\eps_1}$ has bounded compression.
For~$t\in I$ let~$\mu^{\eps_0,\eps_1}_t\eqdef (\ev_t)_\pfwd \boldpi^{\eps_0,\eps_1}=\rho_t\mssm$ be the $W_2$-geodesic connecting~$\mu^{\eps_0}$ to~$\mu^{\eps_1}$.
By the quasi curvature-dimension condition,~\eqref{eq:QCD} holds for some~$Q\geq 1$,~$K\in\R$, and~$N\in (1,\infty)$.
As a consequence, for~$\boldpi^{\eps_0,\eps_1}$-a.e.~$\gamma$,
\begin{align*}
\rho_t(\gamma_t)\leq &\ Q \tonde{\tau^\sym{1-t}_{K,N}\ttonde{\mssd(\gamma_0,\gamma_1)}\, \rho_0^{-1/N}(\gamma_0)+ \tau^\sym{t}_{K,N}\ttonde{\mssd(\gamma_0,\gamma_1)}\, \rho_1^{-1/N}(\gamma_1)}^{-N}
\\
=&\ Q \tonde{\tau^\sym{1-t}_{K,N}\ttonde{\mssd(\gamma_0,\gamma_1)}\, \mssm B_{\eps_0}(x_0)^{1/N}+ \tau^\sym{t}_{K,N}\ttonde{\mssd(\gamma_0,\gamma_1)}\, \mssm B_{\eps_1}(x_1)^{1/N} }^{-N}
\\
\leq&\ Q\,\min\set{\mssm B_{\eps_0}(x_0),\mssm B_{\eps_1}(x_1)}^{-1} \tonde{\tau^\sym{1-t}_{K,N}\ttonde{\mssd(\gamma_0,\gamma_1)}+ \tau^\sym{t}_{K,N}\ttonde{\mssd(\gamma_0,\gamma_1)} }^{-N}
\end{align*}
which is finite uniformly in~$t\in I$ since~$\tau^\sym{t}_{K,N}(\theta)+\tau^\sym{1-t}_{K,N}(\theta)$ is bounded away from~$0$ uniformly in~$t\in I$ locally uniformly in~$\theta\in [0,\infty)$.
Since~$\rho_t$ is concentrated on~$\ev_t(G^{\eps_0,\eps_1})$, the previous inequality concludes that~$\boldpi^{\eps_0,\eps_1}$ has bounded compression.

\smallskip

In order to show Definition~\ref{d:GigHan}\iref{i:d:GigHan:3}, note that, since~$\boldpi^{\eps_0,\eps_1}\in \OptGeo(\mu^{\eps_0}, \mu^{\eps_1})$, then, in the constant speed parametrization,
\begin{align*}
\iint_I \abs{\dot\gamma}_t^2 \diff t\diff\boldpi^{\eps_0,\eps_1}(\gamma)\leq \ttonde{\mssd(x_0,x_1)+\eps_0+\eps_1}^2\xrightarrow{\ \eps_0,\eps_1\downarrow 0\ } \mssd(x_0,x_1)^2\comma
\end{align*}
which proves the assertion.

\smallskip

It remains to show the measurability assertion in Definition~\ref{d:GigHan}\iref{i:d:GigHan:1}.
To this end, it suffices to note that, letting~$E_i\colon \eps_i\mapsto \mu^{\eps_i}$,
\begin{align*}
\Pi\circ (E_0, E_1)\colon (\eps_0,\eps_1)\longmapsto \boldpi^{\eps_0,\eps_1}
\end{align*}
with~$\Pi$ as in~\eqref{eq:Pi}.
Since~$(X,\mssd)$ is proper by Corollary~\ref{c:Proper}, choosing~$K_i\eqdef B_\eps(x_i)$ in Corollary~\ref{c:ContPi} shows that the map~$\Pi$ is narrowly/narrowly continuous (hence Borel) on the image of~$(E_0, E_1)$.
Thus, it suffices to show that~$E_i$ is Borel for~$i=0,1$.
We show that~$E_i$, $i=0,1$, is Euclidean/narrowly continuous.
Let~$f\in\mcC_b(X)$ be fixed. It suffices to show that~$r\mapsto E_i(r) f$ is continuous on~$[0,\eps)$.
The continuity at~$r=0$ holds since~$\mssm$ has no atoms.
For~$r,s>0$, we have that
\begin{align*}
\abs{E_i(r)f-E_i(s)f}\leq&\ \abs{\frac{1}{\mssm B_r(x_i)}-\frac{1}{\mssm B_s(x_i)}} \int_{B_r(x_i)} \abs{f}\diff \mssm
\\
&+\ \frac{1}{\mssm B_s(x_i)}
\int_{B_r(x_i)\triangle B_s(x_i)}\abs{f} \diff\mssm \fstop
\end{align*}
The second term vanishes as~$r\to s$ by continuity of the measure~$\mssm$.
As for the first term, it suffices to show that~$r\mapsto \mssm B_r(x_i)$ is continuous for~$r\in [0,\eps)$. This follows from the Portmanteau Theorem, since all balls in~$X$ are continuity sets for~$\mssm$ by the first assertion in Corollary~\ref{c:ContinuitySet}.

\smallskip

If~$x_0=x_1$, then the requirements in Definition~\ref{d:GigHan}\iref{i:d:GigHan:2}--\iref{i:d:GigHan:3} hold trivially, and Definition~\ref{d:GigHan}\iref{i:d:GigHan:1} holds as in the case~$x_0\neq x_1$ discussed above.
\end{proof}
\end{thm}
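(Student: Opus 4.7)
My plan is to take the two points $x_0,x_1\in X$ (first assume $x_0\neq x_1$), fix $\eps$ small enough that the balls $B_\eps(x_0)$ and $B_\eps(x_1)$ are disjoint, and define for each $(\eps_0,\eps_1)\in(0,\eps]^{\times 2}$ the probability measures $\mu^{\eps_i}$ as normalized indicators of $B_{\eps_i}(x_i)$, as in~\eqref{eq:ApproximatingMeasures}. Since $\mu^{\eps_i}\ll\mssm$ and has compact support (the space is proper by Corollary~\ref{c:Proper}), the Monge property (Definition~\ref{d:Monge}\iref{i:d:Monge:1}) yields a \emph{unique} optimal dynamical plan $\boldpi^{\eps_0,\eps_1}\in\OptGeo(\mu^{\eps_0},\mu^{\eps_1})$, which I will declare to be the candidate test plan. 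The marginal condition Definition~\ref{d:GigHan}\iref{i:d:GigHan:2} is then built in by construction.

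The bulk of the work is to verify that $\boldpi^{\eps_0,\eps_1}$ is in fact a test plan. By Lisini's theorem, every optimal dynamical plan is concentrated on $\AC^2(I;X)$ and has finite $2$-energy, so the only nontrivial property is \emph{bounded compression}. Here is where I would use the $\QCD$ hypothesis~\eqref{eq:QCD}: inverting it gives an upper bound on $\rho_t(\gamma_t)$ of the form
\begin{align*}
\rho_t(\gamma_t)\leq Q\,\ttonde{\tau^\sym{1-t}_{K,N}(\mssd(\gamma_0,\gamma_1))\,\mssm B_{\eps_0}(x_0)^{1/N}+\tau^\sym{t}_{K,N}(\mssd(\gamma_0,\gamma_1))\,\mssm B_{\eps_1}(x_1)^{1/N}}^{-N}.
\end{align*}
Because $\mssd(\gamma_0,\gamma_1)$ ranges in a bounded set (the support of $\boldpi^{\eps_0,\eps_1}$ consists of geodesics with endpoints in the disjoint compact balls) and $\tau^\sym{t}_{K,N}(\theta)+\tau^\sym{1-t}_{K,N}(\theta)$ is bounded away from $0$ uniformly in $t\in I$ locally uniformly in $\theta$, this bound is uniform in $t$, giving bounded compression. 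The energy estimate Definition~\ref{d:GigHan}\iref{i:d:GigHan:3} follows immediately from optimality, since any plan supported on geodesics from $B_{\eps_0}(x_0)$ to $B_{\eps_1}(x_1)$ has kinetic energy at most $(\mssd(x_0,x_1)+\eps_0+\eps_1)^2$.

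For the measurability requirement Definition~\ref{d:GigHan}\iref{i:d:GigHan:1}, I would factor the map $(\eps_0,\eps_1)\mapsto\boldpi^{\eps_0,\eps_1}$ as $\Pi\circ(E_0,E_1)$, where $E_i\colon\eps_i\mapsto\mu^{\eps_i}$ and $\Pi$ is the selection map from Corollary~\ref{c:ContPi}. Corollary~\ref{c:ContPi} (applied with $K_i=\overline{B}_\eps(x_i)$) already gives narrow/narrow continuity of $\Pi$ on the relevant domain, so it suffices to check narrow continuity of $E_i$. This reduces to checking that $r\mapsto\mssm B_r(x_i)$ and $r\mapsto\int_{B_r(x_i)}|f|\diff\mssm$ are continuous, which follows from the Portmanteau theorem once we know that balls are continuity sets for $\mssm$ --- exactly the content of Corollary~\ref{c:ContinuitySet}.

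The step I expect to be most delicate is bounded compression, since it is the one place where the $\QCD$ inequality genuinely enters and where the weight $Q$ could in principle cause trouble; but because the right-hand side of~\eqref{eq:QCD} depends on $Q$ only through the multiplicative constant $Q^{-1/N}$, the argument carries through verbatim as in the classical $\CD$ case. The case $x_0=x_1$ can be handled trivially: Definition~\ref{d:GigHan}\iref{i:d:GigHan:2}--\iref{i:d:GigHan:3} are vacuous once the endpoints coincide, and the measurability in \iref{i:d:GigHan:1} is proved exactly as above.
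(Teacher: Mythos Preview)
Your proposal is correct and follows essentially the same approach as the paper: define $\boldpi^{\eps_0,\eps_1}$ as the unique optimal dynamical plan between the normalized ball measures, invert the $\QCD$ inequality~\eqref{eq:QCD} to obtain bounded compression, use the trivial diameter bound for the energy estimate, and factor the assignment $(\eps_0,\eps_1)\mapsto\boldpi^{\eps_0,\eps_1}$ through Corollary~\ref{c:ContPi} together with the continuity of $r\mapsto\mssm B_r(x_i)$ from Corollary~\ref{c:ContinuitySet} to obtain measurability. The only cosmetic difference is your (slightly more precise) choice $K_i=\overline{B}_\eps(x_i)$ rather than the open ball.
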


Combining Corollary~\ref{c:LocalDoubling} with Proposition~\ref{p:GigHan} and Theorem~\ref{t:Main}, we conclude the Sobolev-to-Lipschitz property for $\QCD$ spaces.

\begin{cor} \label{cor: SLQCD}
Every $\QCD$ space satisfies~\ref{eq:SL}.
\end{cor}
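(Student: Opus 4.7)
The plan is to assemble Corollary~\ref{cor: SLQCD} as a direct consequence of the three earlier results already in place: the local doubling property (Corollary~\ref{c:LocalDoubling}), the measured-length structure (Theorem~\ref{t:Main}), and the implication due to Gigli--Han (Proposition~\ref{p:GigHan}). No new computation is required; only a verification that the hypotheses of Proposition~\ref{p:GigHan} are met.

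First I would fix a $\QCD$ space $(X,\mssd,\mssm)$. By Corollary~\ref{c:LocalDoubling}, $(X,\mssd,\mssm)$ is locally doubling in the sense recalled just before that corollary. Locally doubling trivially implies a.e.-locally doubling (take the exceptional set $N=\emp$ in the definition from~\cite[Dfn.~3.18]{GigHan18}), so the first hypothesis of Proposition~\ref{p:GigHan} is satisfied. Next, by Theorem~\ref{t:Main}, $(X,\mssd,\mssm)$ is a measured-length space in the sense of Definition~\ref{d:GigHan}, which supplies the second hypothesis.

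Having verified both hypotheses, I would invoke Proposition~\ref{p:GigHan} to conclude that $(X,\mssd,\mssm)$ satisfies~\ref{eq:SL}, which is precisely the statement of Corollary~\ref{cor: SLQCD}. There is no genuine obstacle here: the substantive work has been performed in Theorem~\ref{t:Main} (where the approximating test plans are constructed from the Monge-type optimal dynamical plans between normalized balls, and their bounded compression is extracted from the $\QCD$ inequality~\eqref{eq:QCD}) and in Lemma~\ref{l:LocalDoubling} (the $Q$-weighted Brunn--Minkowski inequality yielding the generalized Bishop--Gromov bound). The corollary therefore amounts to a one-line assembly of these results.
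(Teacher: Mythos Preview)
Your proposal is correct and matches the paper's own argument essentially verbatim: the paper states just before the corollary that it follows by combining Corollary~\ref{c:LocalDoubling}, Theorem~\ref{t:Main}, and Proposition~\ref{p:GigHan}, which is precisely the assembly you describe (your added remark that locally doubling trivially implies a.e.-locally doubling with $N=\emp$ is the only extra detail, and it is indeed obvious).
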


\blue{
\begin{rem}
It would not be difficult to show that the original proof of~\ref{eq:SL} for~$\CD(K,N)$ spaces~\cite[p.~48]{Gig13} can be adapted as well to the case of~$\QCD(Q,K,N)$ spaces.
The proof in~\cite{Gig13} relies on an argument in~\cite{Raj12} providing suitable test plans connecting the approximating measures~$\mu^{\eps_i}$ in~\eqref{eq:ApproximatingMeasures}.

The proof we presented above makes instead use of the more refined notion of measured-length space in~\cite{GigHan18}.
Whereas slightly more involved, this proof makes more explicit the relation between the inequality~\eqref{eq:QCD} defining the $\QCD$, and the upper bound on the compression for the aforementioned test plans.
\end{rem}
}

\section{Properties of $\RQCD$ spaces}\label{sec: RQCD}
In this section, we prove several applications of \ref{eq:SL} under the additional assumption that the Cheeger energy is quadratic.

\begin{defs}[cf.~{\cite[\S7.2]{Mil21}}]
Let~$(X,\mssd,\mssm)$ be a metric measure space,~$Q\geq 1$,~$K\in\R$, and~$N\in (1,\infty)$.
We say it satisfies the \emph{Riemannian quasi curvature-dimension condition}~$\RQCD(Q,K,N)$ if it satisfies~$\QCD(Q,K,N)$ and it is additionally infinitesimally Hilbertian, i.e.~$\Ch[\mssd,\mssm]$ is a \emph{quadratic} functional.
\end{defs}

In the following, we omit the indices~$Q$,~$K$, and~$N$ whenever not relevant.
Note that, when~$(X,\mssd,\mssm)$ is an $\RQCD$ space, the quadratic form induced by the Cheeger energy of~$(X,\mssd,\mssm)$ by polarization is a Dirichlet form, again denoted by~$\Ch[\mssd,\mssm]$ and still called the Cheger energy of~$(X,\mssd,\mssm)$.

Recall that a Dirichlet form~$(\mcE,\mcF)$ on a locally compact Polish space~$(X,\mssd,\mssm)$ is called \emph{regular} if~$\mcF\cap \mcC_0(X)$ is both $\ttonde{\mcE(\emparg)+\norm{\emparg}^2_{L^2}}^{1/2}$-dense in~$\mcF$ and uniformly dense in~$\mcC_0(X)$.

\begin{prop}\label{p:Regularity}
Let~$(X,\mssd,\mssm)$ be an $\RQCD$ space. 
Then~$(\Ch[\mssd,\mssm], W^{1,2})$ is a regular strongly local Dirichlet form.
\end{prop}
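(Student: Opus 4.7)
My plan is to verify the three attributes (Dirichlet form, strong locality, regularity) separately, exploiting the groundwork already laid in the paper.

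First I would record that $(X,\mssd)$ is locally compact: by Corollary~\ref{c:Proper} it is proper, so all closed balls are compact, which also forces $\Lip_{bs}(\mssd)\subset \Cc(X)\subset \Cz(X)$. The fact that $(\Ch[\mssd,\mssm],W^{1,2})$ is a strongly local Dirichlet form is then a well-known consequence of infinitesimal Hilbertianity, established in the Ambrosio--Gigli--Savar\'e framework: the Markovian property follows from the chain rule for the slope applied to $1$-Lipschitz truncations $f\mapsto (0\vee f)\wedge 1$ and passes to the relaxation; strong locality follows from the fact that the weak upper gradient $\slo[w]{f}$ vanishes $\mssm$-a.e.\ on $\{f=c\}$ for every constant $c$, together with the bilinearity of the carr\'e du champ guaranteed by infinitesimal Hilbertianity. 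These facts do not use $\QCD$, and I would merely quote them.

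The core of the proof is therefore the regularity. I need to check that the algebra $\Cz(X)\cap W^{1,2}$ is both uniformly dense in $\Cz(X)$ and $\Ch_1^{1/2}$-dense in $W^{1,2}$, where $\Ch_1(f)\eqdef \Ch[\mssd,\mssm](f)+\norm{f}_{L^2(\mssm)}^2$. For the uniform density I would apply the locally compact version of the Stone--Weierstrass theorem to the sub-algebra $\Lip_{bs}(\mssd)$: it is closed under $\wedge,\vee$ and pointwise products (the latter because products of bounded Lipschitz functions are Lipschitz), it separates points via the family $x\mapsto (R-\mssd(x,y))^+$ for $y\in X$, $R>0$, and it vanishes nowhere for the same reason. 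Hence $\Lip_{bs}(\mssd)$ is uniformly dense in $\Cz(X)$, and since $\Lip_{bs}(\mssd)\subset W^{1,2}$ (this is the Rademacher-type property~\ref{eq:Rad}, which holds by construction of $\Ch$), this yields the first density.

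For the $\Ch_1^{1/2}$-density in $W^{1,2}$ I would argue as follows. By the very definition of the Cheeger energy via relaxation, for each $f\in W^{1,2}$ there is a sequence $f_n\in \Lip_{bs}(\mssd)$ with $f_n\to f$ in $L^2(\mssm)$ and $\limsup_n \int \slo{f_n}^2\diff\mssm \leq \Ch[\mssd,\mssm](f)$; in particular $\seq{f_n}_n$ is bounded in the Hilbert space $W^{1,2}$ (its norm being $\Ch_1^{1/2}$ by infinitesimal Hilbertianity). Passing to a weakly convergent subsequence and applying Mazur's lemma to suitable convex combinations of the $f_n$ yields a sequence in $\Lip_{bs}(\mssd)$ converging to $f$ in the $\Ch_1^{1/2}$-norm. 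Since $\Lip_{bs}(\mssd)\subset \Cc(X)\cap W^{1,2}\subset \Cz(X)\cap W^{1,2}$, both density requirements are satisfied and the form is regular.

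The main obstacle I anticipate is purely bookkeeping: carefully arranging the Mazur/weak-compactness argument so that the convex combinations remain in $\Lip_{bs}(\mssd)$ (which is preserved under finite convex combinations) and confirming that the quadratic form structure identifies the weak $W^{1,2}$-limit with $f$. Everything else is standard once properness (Corollary~\ref{c:Proper}) and infinitesimal Hilbertianity are in hand.
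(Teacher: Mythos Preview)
Your proposal is correct and follows essentially the same approach as the paper: the paper's proof simply asserts that regularity follows from the uniform density of~$\Lip_{bs}(\mssd)$ in~$\mcC_0(X)$ together with the $W^{1,2}$-norm density of~$\Lip_{bs}(\mssd)$, and that strong locality is a standard consequence of the locality of~$\slo[w]{\emparg}$. You have merely spelled out these two densities (via Stone--Weierstrass and a Mazur-lemma argument, respectively) and the strong locality in more detail, so there is no substantive difference in strategy.
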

\begin{proof}
The regularity follows directly from the uniform density of~$\Lip_{bs}(\mssd)$ in~$\mcC_0(X)$ and from the norm density of~$\Lip_{bs}$ in~$W^{1,2}$.
The strong locality is then a standard consequence of the locality of the weak upper gradient~$\slo[w]{\emparg}$.
\end{proof}

\begin{ese}[sub-Riemannian manifolds]\label{ese:SubRiem}
Let~$(M,\mcH)$ be a sub-Riemannian manifold with smooth non-holonomic distribution~$\mcH$ on~$TM$.

On the one hand, by the Chow--Rashevskii Theorem, endowing~$M$ with its Carnot--Carath\'eodory distance~$\mssd_\cc$ and with a smooth measure~$\mssm$ turns it into a proper metric measure space, thus admitting a Cheeger energy~$\Ch[\mssd_\cc,\mssm]$.
On the other hand, the sub-Laplacian~$L$ induced by the distribution~$\mcH$ generates a regular Dirichlet form~$(\mcE,\mcF)$ on~$L^2(\mssm)$ with core~$\mcC^\infty_c(X)$, defined by~$\mcE(f,g)=\scalar{f}{-Lg}_{L^2(\mssm)}$, see e.g.~\cite[p.~191]{BauGar17}.

In fact, for a sub-Riemannian manifold~$(M,\mssd_\cc,\mssm)$ as above, the Dirichlet form $(\mcE,\mcF)$ coincides with the Cheeger energy~$(\Ch[\mssd_\cc,\mssm], W^{1,2})$ of~$(M,\mssd_\cc,\mssm)$, viz.
\begin{align}\label{eq:E=Ch}
(\mcE,\mcF)=(\Ch[\mssd_\cc,\mssm],W^{1,2})\comma
\end{align}
as we show below.

As a consequence, every sub-Riemannian manifold~$(M,\mssd_\cc,\mssm)$ satisfying the~$\QCD$ condition satisfies as well the~$\RQCD$ condition with same parameters.
See~\cite{Mil21} for relevant families of examples of sub-Riemannian manifolds satisfying the~$\QCD$ condition.
\end{ese}
\begin{proof}[Proof of~\eqref{eq:E=Ch}]
On the one hand, by~\cite[Thm.~6.2]{AmbGigSav14}, the square field~$\slo[w]{f}$ of~$f\in W^{1,2}$ coincides with the minimal $2$-weak upper gradient of~$f$.
On the other hand, by~\cite[Thm.~11.7]{HajKos00}, the square field~$\Gamma(f)$ of~$f\in \mcF\cap\mcC(X)$ coincides as well with the minimal $2$-weak upper gradient of~$f$.
As a consequence,~$\mcE(f)=\Ch[\mssd_\cc,\mssm](f)$ for every~$f\in W^{1,2}\cap \mcC(X)$ as well,
and the conclusion follows since~$W^{1,2}\cap \Cz(X)$ is a core for~$(\Ch[\mssd_\cc,\mssm], W^{1,2})$ by regularity of the latter (Prop.~\ref{p:Regularity}) and since~$\mcF\cap \Cz(X)$ is a core for~$(\mcE,\mcF)$ by definition.
\end{proof}

For a \emph{regular} Dirichlet form~$(\mcE,\mcF)$ on~$L^2(\mssm)$, the \emph{local domain}~$\mcF_\loc$ of~$\mcE$ is defined as the space of all functions~$f\in L^0(\mssm)$ so that, for each relatively compact open~$G\subset X$ there exists~$f_G\in\mcF$ with~$f\equiv f_G$ $\mssm$-a.e.\ on~$G$.
When~$(\mcE,\mcF)$ admits square field~$\mcF\ni (f,g)\mapsto \Gamma(f,g)\in L^1(\mssm)$, the quadratic form~$f\mapsto\Gamma(f)\eqdef \Gamma(f,f)$ naturally extends to the local domain~$\mcF_\loc$, e.g.~\cite[\S4.1.i]{Stu94}.

Let us define the following local versions of the Rademacher-type and Sobolev-to-Lipschitz properties:
\begin{gather}
\tag*{$(\mathsf{Rad})_\loc$}\label{eq:RadLoc}
\bLip(\mssd) \subset W^{1,2}_\loc\comma \qquad \slo[w]{f}\leq \Li{f} \as{\mssm}
\\
\nonumber
\\
\tag*{$(\mathsf{SL})_\loc$}\label{eq:SLLoc}
\begin{gathered}
\text{each~$f\in W^{1,2}_\loc\cap L^\infty(\mssm)$ with~$\slo[w]{f}\leq 1$ $\mssm$-a.e.}
\\
\text{has a Lipschitz $\mssm$-modification~$\rep f$ with~$\Li{\rep f} \leq 1$}\fstop
\end{gathered}
\end{gather}

Again by construction,~\ref{eq:RadLoc} holds on every metric measure space.

\begin{prop} \label{p:RSLloc}
Every~$\RQCD$ space satisfies~\ref{eq:SLLoc}.
\begin{proof}
Since we already have \ref{eq:Rad}, by construction of~$\Ch[\mssd,\mssm]$, and~\ref{eq:SL}, by Corollary~\ref{cor: SLQCD}, in order to localize both properties it suffices to show the existence of good Sobolev cut-off functions, similarly to the proof of Theorem~3.9 in~\cite{AmbGigSav15}.

\blue{%
For every~$n\in \N$ and fixed~$x_0\in X$ set~$\theta_n\colon x \mapsto n\wedge \ttonde{2n-\mssd(x,x_0)}_+$.
Since~$\supp\,\theta_n$ is bounded for every~$n\in \N$, we conclude by~\ref{eq:Rad} that~$\theta_n\in W^{1,2}\cap \mcC_0(X)$ and~$\slo[w]{\theta_n}\leq 1$ $\mssm$-a.e.\ for every~$n\in\N$.

Fix now~$f\in W^{1,2}_\loc\cap L^\infty(\mssm)$ with~$\slo[w]{f}\leq 1$ $\mssm$-a.e.. Without loss of generality, up to an additive constant, we may assume that~$f\geq 0$.
By the local property of~$\slo[w]{\emparg}$, we have that
\begin{align*}
\slo[w]{(\theta_n\wedge f)} =& \car_{\set{\theta_n\leq f}} \slo[w]{\theta_n}+\car_{\set{\theta_n>f}} \slo[w]{f} \leq 1 \as{\mssm}
\end{align*}
by assumption on~$f$ and properties of~$\theta_n$, whence~$\theta_n\wedge f_n\in W^{1,2}$. From~\ref{eq:SL} we conclude that~$\theta_n\wedge f$ has a non-relabeled $\mssd$-Lipschitz $\mssm$-representative.
Analogously,~$\theta_{n+1}\wedge f\in W^{1,2}$ and, for all~$n\geq \norm{f}_{L^\infty}$, we have that~$\theta_{n+1}\wedge f\equiv \theta_n\wedge f\equiv f$ $\mssm$-a.e.\ on~$B_n(x_0)$.
Since~$\mssm$ has full topological support, we conclude that the respective $\mssd$-Lipschitz $\mssm$-representatives coincide everywhere on~$B_n(x_0)$.
As a consequence,~$\seq{\theta_n\wedge f}_n$ is a consistent family of $\mssd$-Lipschitz functions coinciding with~$f$ $\mssm$-a.e.\ on~$B_n(x_0)$ for all (sufficiently large)~$n\in \N$.
The conclusion readily follows letting~$n\to\infty$.
}
\end{proof}
\end{prop}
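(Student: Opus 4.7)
My plan is to localize the global Sobolev-to-Lipschitz property of Corollary~\ref{cor: SLQCD} via a cut-off argument built from the distance function, exploiting that every $\QCD$ space is proper (Corollary~\ref{c:Proper}). Fixing a basepoint $x_0\in X$, I would introduce the globally $1$-Lipschitz, compactly supported functions
\[
\theta_n(x)\eqdef n\wedge \ttonde{2n-\mssd(x,x_0)}_+\comma \qquad n\in\N\comma
\]
which equal $n$ on $B_n(x_0)$ and vanish outside $B_{2n}(x_0)$. By the Rademacher-type property~\ref{eq:Rad}, each $\theta_n$ lies in $W^{1,2}\cap \mcC_0(X)$ with $\slo[w]{\theta_n}\leq 1$ $\mssm$-a.e..

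Given $f\in W^{1,2}_\loc\cap L^\infty(\mssm)$ with $\slo[w]{f}\leq 1$ $\mssm$-a.e., I would reduce to $f\geq 0$ by an additive translation, and then truncate via $f_n\eqdef \theta_n\wedge f$. Each $f_n$ is bounded with support in $\overline{B}_{2n}(x_0)$, hence in $L^2(\mssm)$, and locality of the weak upper gradient yields
\begin{align*}
\slo[w]{f_n}\leq \car_{\set{\theta_n\leq f}}\,\slo[w]{\theta_n}+\car_{\set{\theta_n>f}}\,\slo[w]{f}\leq 1 \as{\mssm}\fstop
\end{align*}
Thus $f_n\in W^{1,2}$, and Corollary~\ref{cor: SLQCD} produces a $1$-Lipschitz $\mssm$-representative $\rep f_n$.

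To patch the $\rep f_n$ together, observe that for $n\geq \norm{f}_{L^\infty(\mssm)}$ we have $\theta_n\equiv n\geq f$ on $B_n(x_0)$, so $f_n\equiv f$ $\mssm$-a.e.\ there, and in particular $\rep f_n\equiv \rep f_{n+1}$ $\mssm$-a.e.\ on $B_n(x_0)$. Continuity of both representatives together with $\supp\,\mssm=X$ upgrades this to pointwise equality on $B_n(x_0)$. The consistent family $\set{\rep f_n}_n$ then glues to a global $1$-Lipschitz function $\rep f$ on $X=\bigcup_n B_n(x_0)$ that is the desired $\mssm$-modification of $f$.

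The main point I expect to require care is the first step: ensuring that $\theta_n\wedge f$ actually belongs to $W^{1,2}$ (and not merely to $W^{1,2}_\loc$) with the sharp bound on $\slo[w]{\emparg}$, given that $f$ itself is only locally Sobolev. This is precisely why the existence of compactly supported Sobolev cut-offs $\theta_n\in W^{1,2}$ coming from~\ref{eq:Rad}, together with the locality of $\slo[w]{\emparg}$, is the crux. The secondary subtlety of promoting $\mssm$-a.e.\ consistency of the $\rep f_n$ to pointwise consistency is resolved by the full topological support of $\mssm$, which is part of the standing hypotheses on metric measure spaces in this paper.
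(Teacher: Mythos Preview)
Your proposal is correct and follows essentially the same approach as the paper: the paper also uses the cut-off functions $\theta_n(x)=n\wedge(2n-\mssd(x,x_0))_+$, truncates via $\theta_n\wedge f$, invokes locality of $\slo[w]{\emparg}$ to land in $W^{1,2}$ with $\slo[w]{(\theta_n\wedge f)}\leq 1$, applies~\ref{eq:SL}, and glues the resulting $1$-Lipschitz representatives using full support of~$\mssm$. Your explicit remarks on properness and on why $\theta_n\wedge f\in W^{1,2}$ (rather than merely $W^{1,2}_\loc$) make the argument slightly more self-contained than the paper's version.
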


Denote by~$P_t\colon L^2(\mssm)\to L^2(\mssm)$ the strongly continuous contraction semigroup associated to~$(\Ch[\mssd,\mssm], W^{1,2})$, and, for open sets~$A,B\subset X$, set
\begin{align*}
P_t(A,B)\eqdef \int_A P_t \car_B \diff \mssm = \int_B P_t \car_A \diff \mssm \fstop
\end{align*}

For sets~$A_1,A_2\subset X$ define
\begin{align*}
\mssd(A_1,A_2)\eqdef \inf_{x_i\in A_i} \mssd(x_1,x_2) \fstop
\end{align*}
We now give the first application of \ref{eq:SL}. 
\begin{thm}[Integral Varadhan short-time asymptotics]\label{t:Varadhan}
Every~$\RQCD$ space satisfies the integral Varadhan-type short-time asymptotic
\begin{align*}
\lim_{t\downarrow 0} \ttonde{-2t \log P_t(U_1,U_2)}= \mssd(U_1,U_2)^2\comma \qquad U_1,U_2 \quad \text{open} \fstop
\end{align*}

\begin{proof}
Let now~$G_\bullet\eqdef \seq{G_n}_n$ be an increasing exhaustion of~$X$ consisting of relatively compact open sets.
Choosing such~$G_\bullet$ in~\cite[Prop.~2.26]{LzDSSuz20} shows that the broad local space~$\DzLocB{\mssm}$ of bounded functions with bounded $\Ch[\mssd,\mssm]$-energy defined in~\cite[\S2.6.1]{LzDSSuz20} coincides with the local space of bounded functions with bounded $\Ch[\mssd,\mssm]$-energy defined above, viz.
\begin{align}\label{eq:IdentityDzLocB}
\DzLocB{\mssm}=\set{f\in W^{1,2}_\loc\cap L^\infty(\mssm) : \slo[w]{f}^2\leq 1} \fstop
\end{align}

Equation~\eqref{eq:IdentityDzLocB} shows that ---~for the form~$(\Ch[\mssd,\mssm], W^{1,2})$~--- the local Rademacher-type and Sobolev-to-Lipschitz properties defined above respectively coincide with~\cite[$(\Rad{\mssd}{\mu})$ with~$\mu\eqdef \mssm$ in Dfn.~3.1]{LzDSSuz20} and~\cite[$(\SL{\mu}{\mssd})$ with~$\mu\eqdef \mssm$ in Dfn.~4.1]{LzDSSuz20}.
As a consequence, all the results established in~\cite{LzDSSuz20} apply as well to present setting.

For a Borel~$A\subset X$, let~$\hr{\mssm, A}$ denote the maximal function~\cite[Prop.~4.14]{LzDSSuz20}.
By~\cite[Lem.~4.16]{LzDSSuz20} we have
\begin{align*}
\mssd(\emparg, A)\leq \hr{\mssm, A} \as{\mssm} \comma \qquad A \quad \text{Borel} \fstop
\end{align*}
By~\cite[Lem.~4.19, Rmk.~4.19(b)]{LzDSSuz20}
\begin{align*}
\mssd(\emparg, U)\geq \hr{\mssm, U} \as{\mssm} \comma \qquad U \quad \text{open} \fstop
\end{align*}
Combining the above inequalities thus yields
\begin{align*}
\mssd(\emparg, U)= \hr{\mssm, U} \as{\mssm} \comma \qquad U \quad \text{open}\comma
\end{align*}
and the conclusion follows as in the proof of~\cite[Cor.~4.26]{LzDSSuz20}.
\end{proof}
\end{thm}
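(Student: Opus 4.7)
The strategy is to reduce the claim to the general framework developed in the authors' companion paper~\cite{LzDSSuz20}, in which integral Varadhan-type asymptotics for a regular strongly local Dirichlet form are derived from an identification between the ambient distance~$\mssd$ and a certain ``maximal-function'' distance~$\hr{\mssm,\emparg}$ encoded by the form itself. Since~$(X,\mssd,\mssm)$ is an~$\RQCD$ space, the Cheeger energy~$(\Ch[\mssd,\mssm], W^{1,2})$ is a regular strongly local Dirichlet form by Proposition~\ref{p:Regularity}, so that framework applies once the hypotheses are correctly translated.

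First, I would fix an exhaustion~$\seq{G_n}_n$ of~$X$ by relatively compact open sets, available because~$X$ is proper by Corollary~\ref{c:Proper}. Relative to this exhaustion, I would identify the ``broad local'' domain of bounded functions of bounded Cheeger energy from~\cite{LzDSSuz20} with the natural local space~$\set{f \in W^{1,2}_\loc \cap L^\infty(\mssm) : \slo[w]{f}\leq 1}$. Once this identification is in place, the properties~\ref{eq:RadLoc} (which holds by construction of~$\Ch[\mssd,\mssm]$) and~\ref{eq:SLLoc} (Proposition~\ref{p:RSLloc}) coincide with the Rademacher-type and Sobolev-to-Lipschitz hypotheses used in~\cite{LzDSSuz20}.

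Combining these two properties, I would then derive the $\mssm$-a.e.\ identity~$\mssd(\emparg, U) = \hr{\mssm, U}$ for every open~$U \subset X$. The inequality~$\mssd(\emparg, A)\leq \hr{\mssm, A}$ $\mssm$-a.e.\ holds for every Borel~$A$ via the Rademacher-type property, since truncations of~$x\mapsto \mssd(x,A)$ are admissible $1$-Lipschitz competitors for the slope defining~$\hr{\mssm,A}$; the reverse inequality~$\mssd(\emparg, U)\geq \hr{\mssm, U}$ for open~$U$ comes from~\ref{eq:SLLoc}, since any bounded function witnessing~$\hr{\mssm,U}$ with weak slope at most~$1$ must admit a $1$-Lipschitz $\mssm$-representative controlled by~$\mssd$. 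With this distance-identification in hand, the Varadhan asymptotic for the pair~$(U_1, U_2)$ follows directly from the Hino--Ram\'irez-type corollary in~\cite{LzDSSuz20}.

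The main obstacle is the identification step between the two notions of local domain: the broad local domain of~\cite{LzDSSuz20} is defined purely in Dirichlet-form-theoretic terms and can a priori be strictly smaller than~$W^{1,2}_\loc\cap L^\infty(\mssm)$. One has to leverage the regularity of~$(\Ch[\mssd,\mssm], W^{1,2})$ together with the chosen exhaustion to ensure equality of the two local domains, so that the Rademacher and Sobolev-to-Lipschitz hypotheses of~\cite{LzDSSuz20} are genuinely at our disposal. A secondary subtlety is the asymmetry between Borel sets (enough for the upper bound on~$\mssd(\emparg,\cdot)$) and open sets (needed for the lower bound), which is precisely why the final statement is restricted to open~$U_1,U_2$.
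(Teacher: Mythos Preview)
Your proposal is correct and follows essentially the same route as the paper's proof: fix a relatively compact open exhaustion to identify the broad local domain of~\cite{LzDSSuz20} with~$\set{f\in W^{1,2}_\loc\cap L^\infty:\slo[w]{f}\leq 1}$, so that~\ref{eq:RadLoc} and~\ref{eq:SLLoc} match the hypotheses in~\cite{LzDSSuz20}, then combine the two resulting inequalities to get~$\mssd(\emparg,U)=\hr{\mssm,U}$ $\mssm$-a.e.\ for open~$U$ and conclude via the Hino--Ram\'irez-type corollary there. Your identification of the domain-matching step as the main subtlety and of the Borel/open asymmetry is exactly what the paper isolates as well (the latter via~\cite[Lem.~4.16]{LzDSSuz20} versus~\cite[Lem.~4.19]{LzDSSuz20}).
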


Denote again by~$P_t\colon L^\infty(\mssm)\to L^\infty(\mssm)$ the extension of~$P_t\colon L^2(\mssm)\to L^2(\mssm)$ to~$L^\infty(\mssm)$.
An $\mssm$-measurable set~$A\subset X$ is called \emph{invariant} if~$\car_A P_t f\equiv P_t (\car_A f)$ for every~$f\in L^\infty(\mssm)$.
We say that the space~$(X,\mssd,\mssm)$ is \emph{irreducible} if every invariant set is either $\mssm$-negligible or $\mssm$-conegligible.

As a corollary of Theorem \ref{t:Varadhan}, we obtain the irreducibility of~$(X,\mssd,\mssm)$.
\begin{cor}[Irreducibility] \label{c:Irr}
Every~$\RQCD$ space is irreducible.
\begin{proof}
By~\cite[Thm.~4.21]{LzDSSuz20} for every Borel~$A\subset X$ with~$\mssm A>0$ there exists a Borel $\mssm$-version~$\widetilde A$ of~$A$ so that the maximal function~$\hr{\mssm,A}$ defined in \cite[Prop.~4.14]{LzDSSuz20} satisfies~$\hr{\mssm, A}= \mssd(\emparg, \widetilde A)$ $\mssm$-a.e.
As a consequence, for every pair of Borel sets~$A_1,A_2$ with~$\mssm A_1,\mssm A_2>0$, we have that
\begin{align*}
\hr{\mssm}(A_1,A_2)\eqdef \mssm\text{-}\essinf_{y\in A_2}\hr{\mssm, A_1}(y)\leq \mssd(\widetilde A_1,\widetilde A_2)<\infty\fstop 
\end{align*} 
\blue{%
By~\cite[Prop.~5.1]{AriHin05},
\begin{align*}
\hr{\mssm}(A_1,A_2)<\infty \iff P_t(A_1,A_2)>0 \text{\; for every\; } t>0\comma
\end{align*}
whence \begin{align}\label{eq:IrredSemigroup}
P_t(A_1, A_2)\eqdef \int_{A_1}P_t\car_{A_2} \diff\mssm>0
\end{align}
for every pair of Borel sets~$A_1,A_2$ with~$\mssm A_1, \mssm A_2>0$.
Now, argue by contradiction that~$(X,\mssd,\mssm)$ is not irreducible.
Then, there exists a Borel set~$A$ with~$\mssm A, \mssm A^\complement>0$ and so that~$P_t\car_A=\car_A P_t$.
Since~\eqref{eq:IrredSemigroup} does not hold for the pair~$A^\complement,A$, we obtain a contradiction, and~$(X,\mssd,\mssm)$ is therefore irreducible.
}
\end{proof}
\end{cor}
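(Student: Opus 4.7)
\textbf{Proof proposal for Corollary~\ref{c:Irr}.} The plan is to reduce the irreducibility of the heat semigroup to strict positivity of~$P_t(A_1,A_2)$ for arbitrary pairs of positive-measure Borel sets, and then apply the contrapositive to an invariant set.

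First I would combine the Rademacher-type property~\ref{eq:RadLoc}, which holds on every metric measure space by construction of~$\Ch[\mssd,\mssm]$, with the localized Sobolev-to-Lipschitz property~\ref{eq:SLLoc} furnished by Proposition~\ref{p:RSLloc}. Via the identification~\eqref{eq:IdentityDzLocB} already recorded in the proof of Theorem~\ref{t:Varadhan}, this unlocks the machinery of the maximal function~$\hr{\mssm,A}$ developed in~\cite{LzDSSuz20} for the Dirichlet form~$(\Ch[\mssd,\mssm],W^{1,2})$ and the reference measure~$\mssm$.

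Next, given any Borel~$A\subset X$ with~$\mssm A>0$, I would invoke~\cite[Thm.~4.21]{LzDSSuz20} to produce a Borel $\mssm$-version~$\widetilde A$ of~$A$ satisfying~$\hr{\mssm,A}(\emparg)=\mssd(\emparg,\widetilde A)$ $\mssm$-a.e. Taking an essential infimum over a second positive-measure Borel set~$A'$ then yields
\[
\hr{\mssm}(A,A')\leq \mssd(\widetilde A,\widetilde{A'})<\infty,
\]
the finiteness being just that the infimum distance between two nonempty subsets of a metric space is a real number. I would then appeal to~\cite[Prop.~5.1]{AriHin05} to translate the finiteness of~$\hr{\mssm}(A,A')$ into the statement that~$P_t(A,A')>0$ for every~$t>0$ and every such pair.

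Finally, I argue by contradiction. If~$(X,\mssd,\mssm)$ were reducible, there would exist an invariant Borel set~$A$ with~$\mssm A,\mssm A^\complement>0$. By invariance,~$P_t\car_A$ vanishes $\mssm$-a.e.\ on~$A^\complement$, so~$P_t(A^\complement,A)=0$, contradicting the positivity just obtained for the pair~$(A^\complement,A)$. The only conceptually delicate step is the application of~\cite[Thm.~4.21]{LzDSSuz20}, which presupposes~\ref{eq:RadLoc} and~\ref{eq:SLLoc}; once this dictionary is in place the deduction is essentially formal.
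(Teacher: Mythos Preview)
Your proposal is correct and follows essentially the same route as the paper's proof: invoke \cite[Thm.~4.21]{LzDSSuz20} (legitimated by~\ref{eq:RadLoc} and~\ref{eq:SLLoc}) to identify~$\hr{\mssm,A}$ with a point-to-set distance, deduce~$\hr{\mssm}(A_1,A_2)<\infty$, apply \cite[Prop.~5.1]{AriHin05} to obtain~$P_t(A_1,A_2)>0$, and conclude by contradiction against an invariant set. Your explicit remark that the hypotheses of \cite[Thm.~4.21]{LzDSSuz20} are supplied by Proposition~\ref{p:RSLloc} and the identification~\eqref{eq:IdentityDzLocB} is a useful clarification that the paper leaves implicit.
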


\begin{rem}[About irreducibility on~$\RCD(K,\infty)$ spaces]   \label{rem: IRCD}
We stress that, in fact, the same proof of irreducibility presented above for~$\RQCD$ spaces holds as well on all~$\RCD(K,\infty)$ spaces, which seems not explicitly stated in the existing literature.
The result holds even in the case when~$\mssm X=\infty$, since we only rely on the local Rademacher-type and Sobolev-to-Lipschitz properties~\ref{eq:RadLoc} and~\ref{eq:SLLoc}, which can be proved by the same localization argument as in Proposition~\ref{p:RSLloc}.
Note that these arguments hold even if the space~$(X,\mssd,\mssm)$ is not locally compact, in which case the local domain $ W^{1,2}_\loc$ may be replaced with the broad local domain, see, e.g.,  \cite[\S2.4]{LzDSSuz20}.
Importantly, this proof does not rely on any heat-kernel estimate. 
\end{rem}

As the second application, we prove that~$\mssd$ coincides with the intrinsic distance ---~in the sense of e.g.~\cite[Eqn.~(1.3)]{Stu94}~--- associated with the Cheeger energy, viz.
\begin{align*}
\mssd_{\Ch[\mssd,\mssm]}(x,y)\eqdef \sup\set{f(x)-f(y) : f\in W^{1,2}_\loc\cap \mcC(X)\comma \slo[w]{f}^2\leq 1 \as{\mssm}}\fstop
\end{align*} 

\begin{thm}\label{t:Distances}
Let~$(X,\mssd,\mssm)$ be an $\RQCD$ space. Then,~$(\Ch[\mssd,\mssm], W^{1,2})$ is a regular strongly local Dirichlet form on~$L^2(\mssm)$, and 
\begin{align}
\label{eq:IntD:1}
\mssd(x,y)=&\ \mssd_{\Ch[\mssd,\mssm]}(x,y)
\\
\label{eq:IntD:1.5}
=&\ \sup\set{f(x)-f(y) : f\in W^{1,2}\cap \mcC_c(X)\comma \slo[w]{f}^2\leq 1 \as{\mssm}}\fstop
\end{align}

\begin{proof}
On the one hand, by~\ref{eq:RadLoc} (established in Proposition~\ref{p:RSLloc}) and by a straightforward adaptation of~\cite[Lemma 3.6]{LzDSSuz20}, we have that~$\mssd\leq \mssd_{\Ch[\mssm,\mssd]}$.
On the other hand, by~\ref{eq:SLLoc} (also in Proposition~\ref{p:RSLloc}) and by a straightforward adaptation of~\cite[Prop.~4.2]{LzDSSuz20}, we have that~$ \mssd_{\Ch[\mssm,\mssd]}\leq \mssd$.
Combining the two inequalities shows the equality in~\eqref{eq:IntD:1}.

Since~$\Ch[\mssd,\mssm]$ is regular, and irreducible by Corollary~\ref{c:Irr},~$(X,\mssd_{\Ch[\mssm,\mssd]})=(X,\mssd)$ satisfies Assumption~(A) in~\cite{Stu94}, and the equality in~\eqref{eq:IntD:1.5} follows by~\cite[Prop.~1.c, p.~193]{Stu94}.
\end{proof}
\end{thm}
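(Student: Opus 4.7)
The plan is to decompose the theorem into three independent claims and handle each by invoking the machinery already built up earlier in the paper, so that only short verifications remain.

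\textbf{Regularity and strong locality.} This is immediate from Proposition~\ref{p:Regularity}, which has nothing to do with the distance comparison and applies to any $\RQCD$ space. So I would dispatch this first and focus the rest of the argument on the two equalities.

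\textbf{The equality $\mssd=\mssd_{\Ch[\mssd,\mssm]}$.} I would prove the two inequalities separately, using the local Rademacher-type and Sobolev-to-Lipschitz properties \ref{eq:RadLoc} and \ref{eq:SLLoc} established in Proposition~\ref{p:RSLloc}. For $\mssd\leq \mssd_{\Ch[\mssd,\mssm]}$: fix $y\in X$ and use the test function $f_y(x)\eqdef \mssd(x,y)$, which is $1$-Lipschitz and continuous, hence by \ref{eq:RadLoc} lies in $W^{1,2}_\loc\cap\mcC(X)$ with $\slo[w]{f_y}\leq 1$ $\mssm$-a.e.; then $\mssd_{\Ch[\mssd,\mssm]}(x,y)\geq f_y(x)-f_y(y)=\mssd(x,y)$. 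This is essentially a straightforward adaptation of~\cite[Lem.~3.6]{LzDSSuz20}. For the reverse $\mssd_{\Ch[\mssd,\mssm]}\leq\mssd$: given an admissible $f\in W^{1,2}_\loc\cap\mcC(X)$ with $\slo[w]{f}\leq 1$ $\mssm$-a.e., truncate $f_n\eqdef (-n)\vee f\wedge n$; by locality of the weak upper gradient, $\slo[w]{f_n}\leq 1$ $\mssm$-a.e., so \ref{eq:SLLoc} supplies a $1$-Lipschitz $\mssm$-representative $\rep f_n$; since $f$ is already continuous and $\mssm$ has full topological support, $\rep f_n$ and $f_n$ coincide everywhere, and letting $n\to\infty$ shows $f$ itself is $1$-Lipschitz, hence $f(x)-f(y)\leq \mssd(x,y)$. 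This mirrors~\cite[Prop.~4.2]{LzDSSuz20}.

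\textbf{The equality in \eqref{eq:IntD:1.5}.} Here one wants to upgrade the supremum to range only over compactly supported Sobolev functions. The natural route is to invoke Sturm's structural result~\cite[Prop.~1.c, p.~193]{Stu94}, whose hypotheses are exactly the regularity of $\Ch[\mssd,\mssm]$ (already in hand via Proposition~\ref{p:Regularity}) together with the irreducibility of the associated semigroup, which is Corollary~\ref{c:Irr}. Once these ingredients are assembled, the identity~\eqref{eq:IntD:1.5} follows directly.

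\textbf{Main obstacle.} The only nontrivial step is the upper bound $\mssd_{\Ch[\mssd,\mssm]}\leq \mssd$, since \ref{eq:SLLoc} a priori applies only to bounded functions whereas candidates in the supremum defining $\mssd_{\Ch[\mssd,\mssm]}$ are merely continuous on a potentially noncompact space. The truncation argument above resolves this cleanly provided one uses continuity of $f$ and full support of $\mssm$ to promote the $\mssm$-a.e.\ modification into a pointwise statement; this is the delicate point to get right, but no new ideas beyond those already present in the excerpt are required.
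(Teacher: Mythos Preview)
Your proposal is correct and follows essentially the same route as the paper: the two inequalities for~\eqref{eq:IntD:1} are obtained via~\ref{eq:RadLoc} and~\ref{eq:SLLoc} exactly as you outline (the paper cites the corresponding adaptations of~\cite[Lem.~3.6, Prop.~4.2]{LzDSSuz20} rather than spelling out the distance test function and the truncation), and~\eqref{eq:IntD:1.5} is then deduced from~\cite[Prop.~1.c]{Stu94} using regularity together with Corollary~\ref{c:Irr}. The only cosmetic difference is that the paper also records that~$(X,\mssd_{\Ch})=(X,\mssd)$ verifies Sturm's Assumption~(A), which is immediate once~\eqref{eq:IntD:1} is in hand.
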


\begin{rem}
Combining Theorem~\ref{t:Distances} with~\cite[Prop.~2.31]{LzDSSuz20} shows that several definitions for the intrinsic distance~$\mssd_{\Ch[\mssm,\mssd]}$ ---~and in particular the one of~$\mssd_\mssm$ in~\cite[Dfn.~2.28]{LzDSSuz20}~--- coincide.
\end{rem}

In the next corollary, we denote by $p_t$ the density w.r.t.~$\mssm$ of the heat-kernel measure of the heat semigroup~$P_t$.

\begin{cor}[Pointwise Varadhan short-time asymptotics]\label{c:pVaradhan}
Every compact $\RQCD_\reg$ space satisfies the pointwise Varadhan short-time asymptotics
\begin{align}\label{eq:pVaradhan}
\lim_{t\downarrow 0} \ttonde{-2t \log p_t(x,y)}= \mssd(x,y)^2 \fstop
\end{align}
\begin{proof}
As a consequence of the~$\MCP$ condition implicit in the notation for~$\RQCD_\reg$, we have the validity of the local weak $2$-Poincar\'e inequality, see~\cite[Cor.~6.6(ii)]{Stu06b}, and of the local doubling property, as noted in Corollary~\ref{c:LocalDoubling}.
Both properties are in fact global, by compactness of~$X$.
By Proposition~\ref{p:Regularity} and Theorem~\ref{t:Distances}, the Dirichlet form~$(\Ch[\mssd,\mssm],W^{1,2})$ is \emph{strongly regular}, i.e.\ regular and so that the intrinsic distance~$\mssd_{\Ch[\mssd,\mssm]}$ induces the original topology.
By~\cite[Thm.~4.1]{Ram01} we conclude~\eqref{eq:pVaradhan} with~$\mssd_{\Ch[\mssd,\mssm]}$ in place of~$\mssd$.
The conclusion now follows since~$\mssd=\mssd_{\Ch[\mssd,\mssm]}$ by Theorem~\ref{t:Distances}.
\end{proof}
\end{cor}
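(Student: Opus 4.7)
The plan is to reduce the corollary to a general pointwise Varadhan-type theorem for strongly regular Dirichlet forms under doubling and Poincar\'e hypotheses, such as Ramirez's theorem~\cite{Ram01}. So I need to verify all the hypotheses of such a theorem in our setting, and then transfer the conclusion from the intrinsic distance~$\mssd_{\Ch[\mssd,\mssm]}$ to the given distance~$\mssd$.

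First I would collect the three main structural ingredients. The \textbf{local doubling} property is already available on every~$\QCD$ space, hence on every~$\RQCD$ space, by Corollary~\ref{c:LocalDoubling}. The \textbf{local weak $2$-Poincar\'e inequality} is not granted by~$\QCD$ alone, but it is granted by~$\MCP$: this is precisely what the regularity assumption~$\RQCD_\reg$ has been designed for, and one can quote~\cite[Cor.~6.6(ii)]{Stu06b} (Sturm's weak Poincar\'e inequality under~$\MCP$). Since~$X$ is compact, both properties are automatically global, which is what Ramirez's framework requires.

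Next I would upgrade regularity to \textbf{strong regularity} of the Dirichlet form~$(\Ch[\mssd,\mssm], W^{1,2})$. Regularity itself was already established in Proposition~\ref{p:Regularity}. Strong regularity additionally requires that the intrinsic distance~$\mssd_{\Ch[\mssd,\mssm]}$ induces the original topology of~$X$. This follows directly from Theorem~\ref{t:Distances}, which gives the identity~$\mssd_{\Ch[\mssd,\mssm]}=\mssd$, and hence automatically the coincidence of the topologies.

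With these three hypotheses in hand (global doubling, global weak Poincar\'e, strong regularity), I would invoke~\cite[Thm.~4.1]{Ram01} to conclude the pointwise Varadhan short-time asymptotic
\begin{align*}
\lim_{t\downarrow 0} \ttonde{-2t\log p_t(x,y)}= \mssd_{\Ch[\mssd,\mssm]}(x,y)^2\fstop
\end{align*}
Finally, using Theorem~\ref{t:Distances} once more to replace~$\mssd_{\Ch[\mssd,\mssm]}$ by~$\mssd$ on the right-hand side yields~\eqref{eq:pVaradhan}.

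The potentially delicate step is the verification of the Poincar\'e inequality: it is the only ingredient not already produced earlier in the paper, and it is the reason why the regularity~$\MCP(K',N')$ is added to~$\QCD$ in the definition of~$\RQCD_\reg$. The identification~$\mssd=\mssd_{\Ch[\mssd,\mssm]}$, which glues together the abstract Dirichlet-form statement and the metric statement~\eqref{eq:pVaradhan}, is the conceptual heart of the argument, but the hard analytic work has already been done in Theorem~\ref{t:Distances} (and ultimately in Corollary~\ref{cor: SLQCD} via~\ref{eq:SL}).
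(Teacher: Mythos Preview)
Your proposal is correct and follows essentially the same route as the paper's own proof: verify doubling via Corollary~\ref{c:LocalDoubling}, obtain the weak $2$-Poincar\'e inequality from the~$\MCP$ assumption via~\cite[Cor.~6.6(ii)]{Stu06b}, globalize both by compactness, establish strong regularity from Proposition~\ref{p:Regularity} together with the identity~$\mssd=\mssd_{\Ch[\mssd,\mssm]}$ of Theorem~\ref{t:Distances}, apply~\cite[Thm.~4.1]{Ram01}, and then replace~$\mssd_{\Ch[\mssd,\mssm]}$ by~$\mssd$ using Theorem~\ref{t:Distances} once more.
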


Finally let us briefly discuss the Lipschitz regularization property of the semigroup $\seq{P_t}_{t\geq 0}$. Let ${\sf c}: [0,+\infty) \to (0, +\infty)$ be a measurable function so that locally uniformly bounded away from~$0$ and infinity.
Following \cite[Definition 3.4]{Ste20}, we say that an $\RQCD$ space satisfies~$\BE_w(\mathsf{c}, \infty)$ if for all $f \in W^{1,2}$ and $t \ge 0$,
\begin{equation}\tag{$\BE_w$}\label{eq:wBE}
\slo[w]{P_tf}^2\le {\sf c}(t)^2\, \ttonde{P_t\slo[w]{f}}^2 \fstop
\end{equation}

\begin{cor}[$L^\infty$-to-$\bLip$-Feller] \label{cor: LL}
Assume that $(X,\mssd,\mssm)$ is an $\RQCD$ space satisfying~\eqref{eq:wBE}. Then, $P_t: L^\infty(X, \mssm) \to \bLip(X, \mssd)$ for any $t>0$ and 
\[
\sqrt{2I_{-2}(t)}\, \Li[\mssd]{P_tf} \le \norm{f}_{L^\infty} \comma \qquad I_{-2}(t)\eqdef\int_0^t\mssc^{-2}(s)\diff s \fstop
\]
\begin{proof}
By Theorem~\ref{t:Main}, the space satisfies~\ref{eq:SL}, i.e.~\cite[(P.5)]{Ste20}, and the assertion holds by~\cite[Cor.~3.21]{Ste20}.
\end{proof}
\end{cor}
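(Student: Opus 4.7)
The plan is to combine two ingredients: the Sobolev-to-Lipschitz property~\ref{eq:SL}, supplied by Corollary~\ref{cor: SLQCD}, and the weak Bakry--\'Emery inequality~\eqref{eq:wBE}. The strategy splits into two stages. First, derive a pointwise ($\mssm$-a.e.) bound on~$\slo[w]{P_tf}$ in terms of~$\norm{f}_{L^\infty}$ alone; second, use~\ref{eq:SL} to upgrade this weak-slope bound to a genuine Lipschitz estimate on a continuous $\mssm$-modification of~$P_tf$.

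For the first stage, the key ingredient is the variance identity
\[
P_t(f^2) - (P_tf)^2 = 2\int_0^t P_s\ttonde{\slo[w]{P_{t-s}f}^2}\diff s\comma
\]
obtained by differentiating $s\mapsto P_s\ttonde{(P_{t-s}f)^2}$ in $s$ and using that the carr\'e du champ of~$(\Ch[\mssd,\mssm], W^{1,2})$ is~$\slo[w]{\cdot}^2$. Writing $P_tf = P_s(P_{t-s}f)$, applying \eqref{eq:wBE} with $P_{t-s}f$ in place of $f$ and $s$ in place of $t$, and using Jensen's inequality $(P_sg)^2 \le P_s(g^2)$, yields
\[
\slo[w]{P_tf}^2 \le \mssc(s)^2 \, P_s\ttonde{\slo[w]{P_{t-s}f}^2}\comma \qquad s\in[0,t]\fstop
\]
Dividing by $\mssc(s)^2$, integrating over $s\in[0,t]$, and combining with the identity produces
\[
2I_{-2}(t)\,\slo[w]{P_tf}^2 \le P_t(f^2) - (P_tf)^2 \le P_t(f^2) \le \norm{f}_{L^\infty}^2\comma
\]
so that $\slo[w]{P_tf}\le \norm{f}_{L^\infty}\big/\sqrt{2I_{-2}(t)}$ $\mssm$-a.e..

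For the second stage, for $f\not\equiv 0$ set $g\eqdef\sqrt{2I_{-2}(t)}\,P_tf/\norm{f}_{L^\infty}$; then $g\in L^\infty(\mssm)$ with $\slo[w]{g}\le 1$ $\mssm$-a.e., and by~\ref{eq:SL} it admits a $1$-Lipschitz $\mssm$-modification. Rescaling gives the bound $\sqrt{2I_{-2}(t)}\,\Li[\mssd]{P_tf}\le \norm{f}_{L^\infty}$ and shows $P_tf\in \bLip(X,\mssd)$ (up to an $\mssm$-modification). The main obstacle I anticipate is the justification of the manipulations above for $f$ merely in $L^\infty(\mssm)$ rather than in $W^{1,2}$, which requires approximating $f$ by functions in $L^2\cap L^\infty(\mssm)$ and exploiting the $L^2$-to-$W^{1,2}$ regularization of $P_t$ provided by the spectral calculus; these technicalities are precisely those handled by~\cite[Cor.~3.21]{Ste20}, so that in practice it suffices to invoke that result, its hypothesis~(P.5) being exactly~\ref{eq:SL}.
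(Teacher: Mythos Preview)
Your proposal is correct and follows essentially the same approach as the paper: both reduce the assertion to~\ref{eq:SL} (you via Corollary~\ref{cor: SLQCD}, the paper via Theorem~\ref{t:Main}) and then invoke~\cite[Cor.~3.21]{Ste20}, whose hypothesis~(P.5) is precisely~\ref{eq:SL}. The only difference is expository: you additionally unpack the Duhamel/variance argument underlying~\cite[Cor.~3.21]{Ste20}, whereas the paper simply cites it as a black box.
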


{\small

}

\end{document}